\documentclass[12pt]{amsart}


\usepackage{caption}
\usepackage{subcaption}
\usepackage{quoting}
\quotingsetup{leftmargin=.5cm, rightmargin=.5cm}

\usepackage{overpic}

\usepackage[dvipsnames]{xcolor}

\usepackage{emptypage} 

\makeindex 

\setlength{\topmargin}{0.0in}
\setlength{\oddsidemargin}{0.20in}
\setlength{\evensidemargin}{0.20in}
\setlength{\textwidth}{6.2in}
\setlength{\textheight}{8.4in}%
\linespread{1.2}
\pagebreak
\baselineskip 10pt



%
%
%
%
\usepackage{thm-restate}
\newtheorem{thm}{Theorem}

\newtheorem{prop}[thm]{Proposition}
\newtheorem{lem}[thm]{Lemma}

\newtheorem{conj}[thm]{Conjecture}
\theoremstyle{definition}

\newtheorem{defn}[thm]{Definition}
\newtheorem{remk}[thm]{Remark}
\newtheorem{remks}[thm]{Remarks}

\newtheorem{exm}[thm]{Example}
\newtheorem{exms}[thm]{Examples}
\newtheorem{notat}[thm]{Notation}
\newtheorem{question}[thm]{Question}

{\hfill$\square$\end{defn}}
{\hfill$\square$\end{remk}}
{\hfill$\square$\end{remks}}
{\hfill$\square$\end{exm}}
{\hfill$\square$\end{exms}}
{\hfill$\square$\end{notat}}

\newcommand{\RR}{\mathbb{R}}      
\newcommand{\ZZ}{\mathbb{Z}}      

\renewcommand{\ker}{{\rm{ker}}}

\newcommand{\tuborg}{\left\{\begin{array}{ll}}
\newcommand{\sluttuborg}{\end{array}\right.}


\newcounter{elno}

\newcounter{elno-abc}   

\newcounter{elno-abc-prime}   

\usepackage[english]{babel}
\usepackage[utf8x]{inputenc}
\usepackage{amsmath}
\usepackage{graphicx}
\usepackage[colorinlistoftodos]{todonotes}

\title{Tight contact structures on toroidal plumbed 3-manifolds}
\author{Tanushree Shah}
\email{tanushrees@cmi.ac.in}
\author{Jonathan Simone}
\email{jsimone@andrew.cmu.edu}
\keywords{Contact topology, Plumbed 3-manifolds, Giroux torsion, Tight structures} \thanks{\emph{Subjclass[2020]}: 57K10, 57K14, 57K33 }
\begin{document}

\begin{abstract}
 We consider tight contact structures on plumbed 3-manifolds with no bad vertices. We discuss how one can count the number of tight contact structures with zero Giroux torsion on such 3-manifolds and explore conditions under which Giroux torsion can be added to these tight contact structures without making them overtwisted. We give an explicit algorithm to construct stein diagrams corresponding to tight structures without Giroux torsion. We focus mainly on plumbed 3-manifolds whose vertices have valence at most 3 and then briefly consider the situation for plumbed 3-manifolds with vertices of higher valence.
\end{abstract}

\maketitle
\section{Introduction} 
\ 

The classification of tight contact structures up to isotopy is known only for a few classes of 3-manifolds. Eliashberg classified tight contact structures on $S^3$, $\mathbb{R}^3$ and $S^2\times S^1$ in \cite{E3}. Kanda \cite{Kan} and Giroux \cite{Gir1} (independently) gave classifications on 3-torus. Etnyre \cite{Et1} classified tight contact structures on some lens spaces. Honda gave a complete classification of tight contact structures on lens spaces, solid tori, and toric annuli with convex boundary in \cite{H1} and on torus bundles which fiber over the circle, and circle bundles which fiber over closed surfaces in \cite{H2}. There is a partial classification for small Seifert fibered spaces \cite{W} \cite{GLS, GLS1}, \cite{Mat}. \ 
\

In \cite{CGH}, it was shown that every atoroidal 3-manifold admits finitely many tight contact structures. In \cite{HKM1}, it is shown that if a 3-manifold has an incompressible torus, however, then it admits infinitely many contact structures. These contact structures come in infinite families resulting from the addition of \textit{Giroux torsion} in a neighborhood of the incompressible torus. We will make this notion more precise below.

Giroux torsion has been studied for relatively few families of contact 3-manifolds---surface bundles over $S^1$ \cite{H2}, certain plumbed 3-manifolds \cite{Sim}, and nonloose torus knot complements \cite{EMM}. An interesting aspect of adding Giroux torsion to a tight contact structure is that it does not always preserve tightness. That is, given a tight contact 3-manifold with an incompressible torus $T$, the contact structure obtained by adding Giroux torsion in a neighborhood of $T$ may or may not be tight. The above-mentioned papers have shed some light on this phenomenon and it appears that the tool of convex surface theory can help us approach the following question.

\begin{question}
Let $T$ be a convex incompressible torus in a tight contact 3-manifold $(Y,\xi)$. Under what conditions does adding Giroux torsion in a neighborhood of $T$ preserve tightness?
\label{q:GT}
\end{question}

For the earliest toriodal contact 3-manifolds studied---e.g. $T^3$ \cite{Kan}, $T^2\times I$ \cite{H1}, and surface bundles over $S^1$ \cite{H2}---the addition of Giroux torsion could only preserve tightness when added to universally tight contact structures.
More recent papers (\cite{EMM},\cite{Sim}) have shown that Giroux torsion can also be added to virtually overtwisted contact structures while preserving tightness. These papers explored 3-manifolds containing a single incompressible torus.

Of course, a contact 3-manifold can contain many incompressible tori. 
If two incompressible tori are isotopic, then adding Giroux torsion to one torus provides the same contact structure (up to isotopy) as adding the same amount of Giroux torsion to the other torus. For example, in the case of $T^2$-bundles over $S^1$ (see \cite{H2}), there are many isotopic incompressible tori; however, we view the 3-manifold as having only one incompressible torus along which Giroux torsion can be added. 
If two nonisotopic incompressible tori intersect nontrivially, then Giroux torsion can be added to only one of the tori; that is, Giroux torsion cannot be added to both tori simultaneously. This occurs in the case of Seifert fibered spaces with four singular fibers (see \cite{Sha}).

We start by exploring tight contact structures on 3-manifolds containing only incompressible tori that are disjoint. To this end, we consider plumbed 3-manifolds whose graphs have no bad vertices\footnote{the negative of the weight of a given vertex is greater than or equal to the valence of the vertex} and whose vertices are most trivalent. See Figure \ref{fig:example} for an example of such a 3-manifold\footnote{The edges of each cycle in a plumbing graph must be decorated with either $+$ or $-$ to indicate how the plumbing operation is to be performed. Undecorated edges are assumed to be positive. We address this in Section \ref{sec:stein}}. 
Such manifolds contain a single isotopy class of incompressible tori for each linear path connecting two trivalent vertices; for example, the plumbed 3-manifold in Figure \ref{fig:example} contains seven nonisotopic incompressible tori.

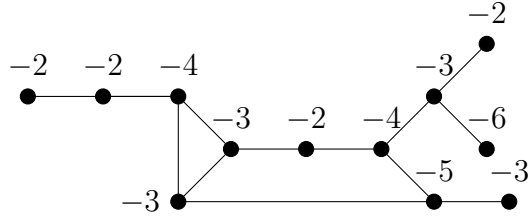
\begin{figure}
\centering 
\begin{tikzpicture}[dot/.style = {circle, fill, minimum size=1pt, inner sep=0pt, outer sep=0pt}]
\tikzstyle{smallnode}=[circle, inner sep=0mm, outer sep=0mm, minimum size=2mm, draw=black, fill=black];
\node[smallnode, label={$-3$}] (a) at (0,0) {};
\node[smallnode, label={$-2$}] (b) at (1,0) {};
\node[smallnode, label={$-4$}] (c) at (2,0) {};

\node[smallnode, label={$-4$}] (a1) at (-.7,.7) {};
\node[smallnode, label=left:{$-3$}] (a2) at (-.7,-.7) {};
\node[smallnode, label={$-3$}] (c1) at (2.7,.7) {};
\node[smallnode, label={$-5$}] (c2) at (2.7,-.7) {};

\node[smallnode, label={$-3$}] (c21) at (3.7,-.7) {};

\node[smallnode, label={$-2$}] (c11) at (3.4,1.4) {};
\node[smallnode, label={$-6$}] (c12) at (3.4,0) {};

\node[smallnode, label={$-2$}] (a11) at (-1.7,.7) {};
\node[smallnode, label={$-2$}] (a12) at (-2.7,.7) {};

\draw[-] (a) -- (b);
\draw[-] (b) -- (c);
\draw[-] (a) -- (a1);
\draw[-] (a) -- (a2);
\draw[-] (a1) -- (a2);
\draw[-] (c) -- (c1);
\draw[-] (c) -- (c2);
\draw[-] (a2) -- (c2);
\draw[-] (a1) -- (a11);
\draw[-] (a1) -- (a12);
\draw[-] (c2) -- (c21);
\draw[-] (c1) -- (c11);
\draw[-] (c1) -- (c12);

\end{tikzpicture}
\caption{A plumbed 3-manifold}\label{fig:example}
\end{figure}

Before discussing our results, we will first recall the definition of Giroux torsion and the more general notation of \textit{twisting}. 
Let $T\in Y$ be a incompressible torus. We say that $(Y,\xi)$ has $\textit{$\frac{m}{2}-$twisting}$ in a neighborhood of $T$ if there exists a contact embedding of ($T^2\times I, \xi_n =\ker(\sin(m\pi z)dx+\cos(m\pi z)dy)$) into $(Y,\xi)$ such that $T^2\times\{t\}$ are isotopic to $T$ \cite{GH}. 
$(Y,\xi)$ is called \textit{minimally twisting} if it does not have $\frac{m}{2}-$twisting for all $m\ge1$ in a neighborhood of any incompressible torus. Finally for $n\in\mathbb{Z}$, we say that $Y$ has $n-$Giroux torsion in a neighborhood of $T$, if it has $n-$twisting in a neighborhood of $T$. 
We are now ready to state our results.

\begin{thm} Let $M$ be a plumbed 3-manifold with no bad vertices and whose vertices are all at most trivalent. Let the weights of the vertices be $-a_1,\ldots,-a_n$. Then $M$ admits at least $(a_1-1)\cdots(a_n-1)$ Stein fillable contact structures.
\label{thm:mainminimallytwisting}
\end{thm}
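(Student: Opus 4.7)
\emph{Proof plan.} The strategy is to realize $M$ as the boundary of a Stein $4$-manifold by Legendrianizing the standard plumbing Kirby diagram and then to count the Legendrian realizations. Following Gompf, the plumbing graph $\Gamma$ gives a Kirby diagram in which each vertex $v_i$ of weight $-a_i$ is an unknot $K_i$ with framing $-a_i$, and each edge between $v_i$ and $v_j$ is realized by Hopf-linking the corresponding unknots (with the sign dictated by the edge decoration). I would Legendrianize this diagram by replacing each $K_i$ with a Legendrian unknot whose contact framing equals the smooth framing $-a_i$, i.e.\ $\overline{tb}(K_i) = -(a_i-1)$, and each Hopf link by a Legendrian Hopf link. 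The no-bad-vertex hypothesis $a_i \geq \text{val}(v_i)$ is precisely the condition under which such a Legendrian realization exists (compare Gompf--Stipsicz), since the maximal $\overline{tb}$ achievable by a Legendrian unknot Hopf-linked with $\text{val}(v_i)$ others is $-\text{val}(v_i)$. The resulting Stein handle attachment then produces a Stein filling $X(\vec r)$ of $M$.

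By the Eliashberg--Fraser classification, Legendrian unknots are determined up to isotopy by the pair $(\overline{tb}, r)$, and for each fixed $\overline{tb} = -(a_i-1)$ there are exactly $a_i-1$ admissible rotation numbers $r_i \in \{-(a_i-2), -(a_i-4), \ldots, a_i-2\}$. Choosing $r_i$ independently at each vertex yields $\prod_{i=1}^n(a_i-1)$ Legendrian realizations of $\Gamma$, and hence $\prod_{i=1}^n(a_i-1)$ Stein fillable contact structures $\xi(\vec r)$ on $M$.

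To prove that distinct choices of $\vec r$ induce non-isotopic contact structures on $M$, I would decompose $M$ along the incompressible tori corresponding to the linear subpaths of $\Gamma$ (exactly the tori discussed in the introduction). Each maximal linear subpath with weights $-b_1, \ldots, -b_k$ contributes a lens-space-like piece whose tight contact structures are classified by Honda into $(b_1-1)\cdots(b_k-1)$ isotopy classes; the rotation-number assignment along such a subpath matches Honda's stabilization data exactly, so two global choices $\vec r \ne \vec r'$ must produce non-isotopic restrictions to at least one subpath. Compatible gluings across the trivalent vertex pieces (each a pair-of-pants times $S^1$) then yield globally distinct $\xi(\vec r)$ on $M$.

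\emph{Main obstacle.} The most delicate part is the last step: a priori distinct Stein fillings of $M$ could induce the same boundary contact structure, so genuine non-isotopy on the boundary must be established rather than just in the filling. Making the cut-and-paste argument rigorous requires tracking how the rotation-number data at a trivalent vertex imprints dividing-curve slopes on the three tori meeting the pair-of-pants times $S^1$ piece, and checking that the gluings to the adjacent lens-space pieces preserve the Honda distinction. This is the technical heart of the argument, and is what one expects the authors' explicit algorithm for the Stein diagrams to facilitate.
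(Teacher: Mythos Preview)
Your overall strategy---Legendrianize the plumbing diagram, count rotation numbers, distinguish the results---matches the paper's, but there are two substantive divergences.

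\textbf{Construction of the Legendrian diagram.} You treat the existence of the Legendrian realization as essentially immediate from the no-bad-vertex hypothesis, citing Gompf--Stipsicz. This is fine when $\Gamma$ is a tree, but the graphs here are allowed to contain cycles (indeed this is the whole point of the paper---see Figure~\ref{fig:example}). Cycles force 1-handles into the Kirby diagram, and arranging the 2-handle attaching circles to be Legendrian unknots with $tb=-1$ while passing through 1-handles is not automatic. The paper's actual technical content is precisely here: a graph-theoretic ``wrapping algorithm'' (Lemmas~\ref{lem:hamiltonian} and~\ref{lem:wrappingalgorithm}) that puts $\Gamma$ into a normal form from which one can systematically draw all unknots with $tb=-1$, including those running through one or two 1-handles, plus a separate treatment of the nonplanar case $\Gamma=\mathcal{K}_{3,3}$. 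Your proposal skips this, so as written it only covers tree-shaped plumbings.

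\textbf{Distinguishing the contact structures.} Here you take a genuinely different route. You propose to cut $M$ along the incompressible tori, restrict to lens-space-like pieces, and invoke Honda's classification to see that different rotation-number vectors give non-isotopic restrictions. You correctly flag this as the delicate step, since it requires tracking dividing-curve data across the pair-of-pants pieces. The paper bypasses all of this: since all the Stein structures live on the \emph{same} smooth 4-manifold, distinct rotation-number vectors give distinct first Chern classes, and Lisca--Mati\'c \cite{LM} immediately yields non-isotopic induced contact structures on the boundary. This is a one-line argument once the Stein diagrams are in hand, and it is what makes the paper's approach efficient. Your convex-surface route could in principle work and would give finer information (e.g.\ identifying which minimally-twisting classes are hit), but it is considerably harder to make rigorous and is not needed for the lower bound.
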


\begin{remk}
    To prove Theorem 2, we will provide an algorithm one can use to ``wrap up" the plumbing diagram so that the methods of \cite{GoS} can be used to draw a handlebody diagram of the associated 4-dimensional plumbing (whose boundary is the original plumbed 3-manifold). We then describe how to algorithmically turn this diagram into a Stein diagram via handleslides and stabilization. 
\end{remk}

Since Stein fillable contact structures have no Giroux torsion by \cite{Gay}, Theorem \ref{thm:mainminimallytwisting} gives a lower bound on the number of tight contact structures with no Giroux torsion.
We next use convex surface theory to obtain upper bounds on the number of tight contact structures with prescribed twisting. Determining a general upper bound for any such plumbed 3-manifold is rather involved. Instead, we produce upper bounds for one family of plumbed 3-manifolds; the techniques we use to obtain this upper bound can be used to handle other cases on an ad hoc basis.

\begin{thm} Let $Y$ be the plumbed 3-manifold shown in Figure \ref{fig:example1} and let $m\in\ZZ_{\ge1}$. Then $Y$ admits at most:
\begin{enumerate}
 \item $\displaystyle(a_1-1)\cdots(a_n-1)\prod_{i=1}^4(b_1^i-1)\cdots(b^i_{k_i}-1)$
minimally twisting tight contact structures; and
 \item $\displaystyle2\prod_{i=1}^4(b_2^i-1)\cdots(b^i_{k_i}-1)$
tight contact structures with $\frac{m}{2}$-twisting.
\end{enumerate}
    \label{thm:mainex}\label{thm:mainGT}
\end{thm}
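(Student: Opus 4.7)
The plan is to apply convex surface theory to the unique isotopy class of incompressible torus $T$ in $Y$, which corresponds to the central linear chain $[-a_1,\ldots,-a_n]$ connecting the two trivalent vertices of the plumbing graph. After isotoping a representative $T$ into convex position and analyzing its dividing set, $Y$ decomposes along $T$ into two ``Y-shaped'' sub-plumbings $Y_L$ and $Y_R$, each a small Seifert fibered space over $D^2$ with two singular fibers. Further decomposing along the edges of the plumbing realizes $Y$ as a union of solid tori (one per vertex) glued along their boundaries, with slopes determined by the weights. Honda's classification of tight contact structures on solid tori and toric annuli \cite{H1} then bounds the contribution of each piece, and multiplying across pieces yields the desired upper bounds, subject to the slope compatibility conditions forced by the twisting hypothesis.

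For Part (1), the minimally twisting condition forces the dividing slope on $T$ to a unique value dictated by the Seifert fibrations on $Y_L$ and $Y_R$. With this slope fixed, Honda's classification gives $(b_1^i - 1)(b_2^i - 1)\cdots(b_{k_i}^i - 1)$ tight contact structures on the solid tori corresponding to the $i$-th branch and $(a_j - 1)$ choices on each solid torus of the central chain. Provided the choices on each piece are realized by compatible dividing sets along all intermediate convex tori (ensured by minimal twisting), the total count is the product, yielding the claimed bound.

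For Part (2), I would insert a $\tfrac{m}{2}$-twisted toric annulus neighborhood of $T$ of the form $(T^2\times I,\xi_m)$. The two universally tight contact structures on this neighborhood (corresponding to the two signs in the $T^2$-invariant model, i.e.\ the two choices of Giroux normal form) contribute the factor of $2$. Crucially, this twisting neighborhood pins down the boundary slopes on the adjacent solid tori: the slopes on the central chain solid tori and on the ``innermost'' branch solid tori (those corresponding to $b_1^i$) become uniquely determined, each admitting exactly one tight extension, collapsing the factors $(a_j - 1)$ and $(b_1^i - 1)$ to $1$. The remaining branch vertices $b_j^i$ for $j \geq 2$ retain their $(b_j^i - 1)$ choices, producing the bound $2\prod_{i=1}^{4}(b_2^i - 1)\cdots(b_{k_i}^i - 1)$.

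The main obstacle will be the careful slope bookkeeping: one must track, via the Farey tessellation, how the boundary slopes evolve through each solid torus as dictated by the continued fraction expansion of the associated chain, and verify that the slope imposed by the $\tfrac{m}{2}$-twisting neighborhood precisely kills the freedom on the central chain and on the vertices $b_1^i$ adjacent to the trivalent vertices while leaving the deeper branches untouched. This requires an edge-rounding analysis at each glued torus, together with showing via the classification that certain candidate extensions on adjacent pieces glue to overtwisted structures and must therefore be excluded from the count.
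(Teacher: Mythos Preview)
Your Part (1) sketch is in the right spirit but misses the step that actually pins down the count: after decomposing $Y$ into two pair-of-pants blocks $\Sigma_1\times S^1$, $\Sigma_2'\times S^1$, a toric annulus $T^2\times[0,1]$ carrying the central chain, and the four solid tori $V_i$, you must show that the two pair-of-pants blocks carry a \emph{unique} tight structure. The paper does this by proving (Lemma~\ref{lem:twisting}) that if either $\Sigma_i\times S^1$ contained a vertical Legendrian of twisting number $0$ then $(Y,\xi)$ would fail to be minimally twisting; Honda's classification in \cite{H2} then gives uniqueness on those blocks. Your claim that ``the dividing slope on $T$ is forced to a unique value'' is not quite right and is not what does the work; rather, one normalizes all boundary slopes to $\pm 1$ (Lemmas~\ref{lem:sT_0=1} and~\ref{lem:maxchainupperbound}) and then invokes the no-twisting-$0$ argument.

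Your Part (2) has a genuine gap: the mechanism you propose for the factor of $2$ and for the collapse of the $(a_j-1)$ and $(b_1^i-1)$ factors is not the one that works, and you are missing the key obstruction. The factor of $2$ does \emph{not} come from ``two universally tight contact structures on the twisted neighborhood''; in the paper's argument the contact structure on the $T^2\times I$ block is in fact uniquely determined once the signs on the adjacent solid tori are fixed. The actual mechanism is the \emph{rigidity} obstruction (Section~\ref{sec:prelim}, Proposition~\ref{prop:rigid}): once there is $\tfrac{m}{2}$-twisting, each $\Sigma_i\times S^1$ contains a vertical Legendrian of twisting $0$, the boundary slopes can be pushed to $\infty$, and then if the pair $T_3,T_4$ (resp.\ $T_1,T_2$) were \emph{nonrigid} one would find an overtwisted disk by Lemma~\ref{lem:addtwist}. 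Rigidity forces the signs of the outermost continued fraction blocks of $V_3,V_4$ to agree (two choices), and these signs then propagate through the $T^2\times I$ to determine the outermost blocks of $V_1,V_2$ uniquely---this is what kills the $(a_j-1)$ and $(b_1^i-1)$ factors and leaves exactly the factor of $2$. Your proposal to ``pin down boundary slopes'' by inserting the twisted annulus does not by itself explain why extensions with mismatched signs on the innermost basic slices are overtwisted; that requires the rigidity/nonrigidity dichotomy and the edge-rounding argument of Lemma~\ref{lem:addtwist}, which your outline does not supply.
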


\begin{remk}
Although the authors believe that the bound in Theorem \ref{thm:mainGT} is sharp, they have been unable to prove it using the current technology. In principle, Honda's state transversal argument should be able to be used to prove that these contact structures are tight, and then further means would be needed to show that they are distinct. 
\label{rem:sharp}
\end{remk}


\begin{remk}
By Theorem \ref{thm:mainminimallytwisting}, the 3-manifold $Y$ in Figure \ref{fig:example1} has a least $\displaystyle(a_1-1)\cdots(a_n-1)\prod_{i=1}^4(b_1^i-1)\cdots(b^i_{k_i}-1)$ tight contact structures with zero Giroux torsion. This matches the upper bound of the number of minimally twisting tight contact structures on $Y$ given by Theorem \ref{thm:mainGT}. However, there are (potentially) $\displaystyle2\prod_{i=1}^4(b_2^i-1)\cdots(b^i_{k_i}-1)$ more tight contact structures with zero Giroux torsion (i.e. the contact structures with $\frac{1}{2}-$twisting). The authors suspect that the contact structures obtained from Theorem \ref{thm:mainminimallytwisting} provide the minimally twisting contact structures given in Theorem \ref{thm:mainGT}. It is worth noting, however, that contact structures induced from Stein structures on a fixed 4-manifold need not have the same amount of twisting. For example, in \cite{Sha}, the Stein diagrams obtained from the obvious star-shaped plumbing diagrams of Seifert fibered spaces with four singular fibers provide both minimally twisting tight contact structures and tight contact structures with $\frac{1}{2}-$twisting.
\end{remk}

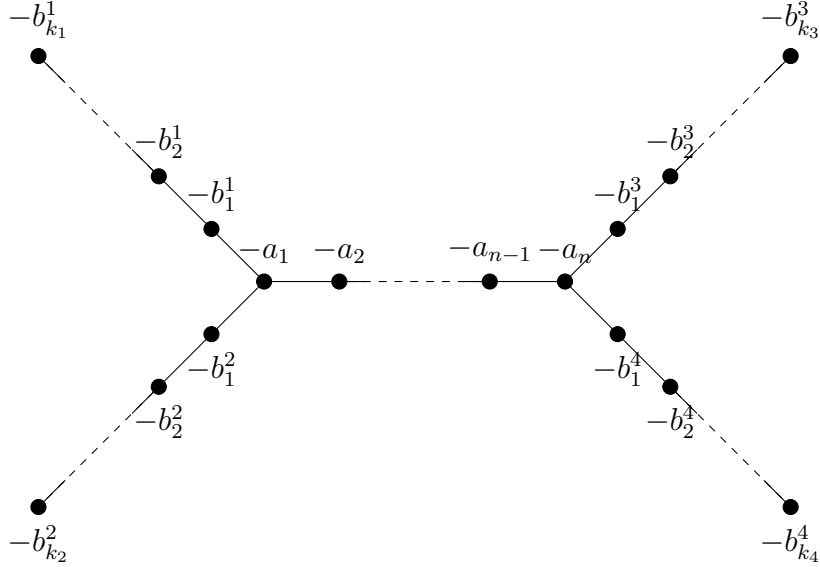
\begin{figure}
\centering 
\begin{tikzpicture}[dot/.style = {circle, fill, minimum size=1pt, inner sep=0pt, outer sep=0pt}]
\tikzstyle{smallnode}=[circle, inner sep=0mm, outer sep=0mm, minimum size=2mm, draw=black, fill=black];

\node[smallnode, label={$-a_1$}] (a) at (0,0) {};
\node[smallnode, label={$-a_2$}] (b) at (1,0) {};
\node[smallnode, label={$-a_{n-1}$}] (e) at (3,0) {};
\node[smallnode, label={$-a_n$}] (f) at (4,0) {};
\node[] (c) at (1.5,0) {};
\node[] (d) at (2.5,0) {}; 

\node[smallnode, label={$-b_1^1$}] (a11) at (-.7,.7) {};
\node[smallnode, label={$-b_2^1$}] (a12) at (-1.4,1.4) {};
\node[] (aa) at (-1.9,1.9) {};
\node[] (ab) at (-2.5,2.5) {}; 
\node[smallnode, label={$-b_{k_1}^1$}] (a13) at (-3,3) {};

\node[smallnode, label=below:{$-b_1^2$}] (b11) at (-.7,-.7) {};
\node[smallnode, label=below:{$-b_2^2$}] (b12) at (-1.4,-1.4) {};
\node[] (ba) at (-1.9,-1.9) {};
\node[] (bb) at (-2.5,-2.5) {}; 
\node[smallnode, label=below:{$-b_{k_2}^2$}] (b13) at (-3,-3) {};

\node[smallnode, label={$-b_1^3$}] (c11) at (4.7,.7) {};
\node[smallnode, label={$-b_2^3$}] (c12) at (5.4,1.4) {};
\node[] (ca) at (5.9,1.9) {};
\node[] (cb) at (6.5,2.5) {}; 
\node[smallnode, label={$-b_{k_3}^3$}] (c13) at (7,3) {};

\node[smallnode, label=below:{$-b_1^4$}] (d11) at (4.7,-.7) {};
\node[smallnode, label=below:{$-b_2^4$}] (d12) at (5.4,-1.4) {};
\node[] (da) at (5.9,-1.9) {};
\node[] (db) at (6.5,-2.5) {}; 
\node[smallnode, label=below:{$-b_{k_4}^4$}] (d13) at (7,-3) {};

\draw[-] (a) -- (b);
\draw[-] (b) -- (c);
\draw[dashed] (b) -- (e);
\draw[-] (d) -- (e);
\draw[-] (e) -- (f);

\draw[-] (a) -- (a11);
\draw[-] (a) -- (a2);
\draw[-] (f) -- (c11);
\draw[-] (f) -- (d11);

\draw[-] (a11) -- (a12);
\draw[-] (a12) -- (aa);
\draw[dashed] (a12) -- (a13);
\draw[-] (ab) -- (a13);

\draw[-] (b11) -- (b12);
\draw[-] (b12) -- (ba);
\draw[dashed] (b12) -- (b13);
\draw[-] (bb) -- (b13);

\draw[-] (c11) -- (c12);
\draw[-] (c12) -- (ca);
\draw[dashed] (c12) -- (c13);
\draw[-] (cb) -- (c13);

\draw[-] (d11) -- (d12);
\draw[-] (d12) -- (da);
\draw[dashed] (d12) -- (d13);
\draw[-] (db) -- (d13);

\end{tikzpicture}
\caption{Plumbed 3-manifold $Y$}\label{fig:example1}
\end{figure}

The proof of Theorem 3 partly addresses Question \ref{q:GT} for the family of 3-manifolds $Y$ shown in Figure \ref{fig:example1}. In particular, we will see that if $(Y,\xi)$ has no Giroux torsion and it does not satisfy a \textit{rigidity} requirement (defined in Section \ref{sec:prelim}), then adding Giroux torsion to $(Y,\xi)$ creates an overtwisted contact structure. Hence the only contact structures with zero Giroux torsion to which Giroux torsion can be added while preserving tightness are the contact structures satisfying the rigidity requirement. 

\subsection{Higher Valence}
We have thus far considered plumbed 3-manifolds that only contain disjoint incompressible tori (up to isotopy). In Section \ref{nfibers}, we will briefly discuss the case of plumbed 3-manifolds containing vertices of valence $\ge4$, which necessarily contain intersecting, nonisotopic incompressible tori.

\subsection{A Conjecture}
It follows from the work of Honda \cite{H1} that if a tight contact 3-manifold $(Y,\xi')$ is obtained from $(Y,\xi)$ by adding Giroux torsion in a neighborhood of an incompressible torus $T$, then any toric annulus $T^2\times I$ with convex boundary containing $T$ must have the property that $(T^2\times I,\xi|_{T^2\times I})$ is universally tight. We have also seen that $(Y,\xi)$ itself need not be universally tight. This suggests the following. 

\begin{conj}
   Let $T$ be a convex incompressible torus in a tight contact 3-manifold $(Y,\xi)$ such that every toric annulus neighborhood of $T$ with convex boundary is universally tight. Then the contact 3-manifold obtained by adding Giroux torsion in a neighborhood of $T$ is tight.
\end{conj}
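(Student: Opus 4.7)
The plan is to construct the torsion-added contact structure $\xi'$ by a gluing procedure, using Honda's classification of tight contact structures on $T^2\times I$, and then to establish tightness of $\xi'$ by combining a state traversal argument with the strong hypothesis that \emph{every} convex toric annulus neighborhood of $T$ (not merely some neighborhood) is universally tight.

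First, I would fix a toric annulus $N\cong T^2\times I$ neighborhood of $T$ with convex boundary. By hypothesis $(N,\xi|_N)$ is universally tight. By Honda's classification \cite{H1}, universally tight structures on $T^2\times I$ with prescribed boundary data are determined (up to isotopy) by their amount of Giroux torsion. Let $N'$ be the universally tight $T^2\times I$ with the same boundary data as $N$ but with an additional $\frac{m}{2}$-twisting, and define $\xi'$ on $Y$ to agree with $\xi$ on $Y\setminus N$ and with the new structure on $N'$. This is the Giroux-torsion-added contact structure whose tightness we must establish.

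To show $(Y,\xi')$ is tight, I would argue by contradiction. Suppose $D\subset (Y,\xi')$ is an overtwisted disk. Using the hypothesis, enlarge $N$ to a convex toric annulus $N_k\supset N$ with controlled boundary slopes; the corresponding thickened piece $N_k'$ remains universally tight. Apply Honda's state traversal to isotope $D$ so that $D\cap \partial N_k'$ is a collection of Legendrian arcs in efficient position with respect to the dividing set. Decompose $D=D_{\mathrm{in}}\cup D_{\mathrm{out}}$ with $D_{\mathrm{in}}\subset N_k'$ and $D_{\mathrm{out}}\subset Y\setminus N_k'$. Since $\xi'|_{N_k'}$ is universally tight, the components of $D_{\mathrm{in}}$ must themselves be disks bounded by Legendrian curves in a universally tight $T^2\times I$, and hence (by Honda's classification of such disks) they contribute only bypasses of a single consistent sign together with trivial regions. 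Attaching these bypasses to the complementary piece transfers the problem to $(Y\setminus N_k,\xi|_{Y\setminus N_k})$, which sits inside the tight manifold $(Y,\xi)$, producing the desired contradiction.

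The hard part is the bypass bookkeeping: one must verify that the bypasses produced by $D_{\mathrm{in}}$ can be absorbed into $Y\setminus N_k'$ without creating an overtwisted disk there. The universal tightness of $N_k'$ tightly restricts which bypasses can appear, but showing that the complement exactly accepts these bypasses seems to require a careful bypass rotation analysis in the spirit of Honda--Kazez--Matić, together with the flexibility provided by the ability to enlarge $N$ to any convex toric annulus. An alternative route is a covering argument: pass to the finite cyclic cover of $N_k'$ associated to the meridional direction, unwinding the Giroux torsion to reduce to a cover of $(Y,\xi)$, which is tight; however, patching the lifted pieces along the lifts of $\partial N_k'$ reintroduces a gluing problem outside $N_k$, and overcoming this appears to need either the non-vanishing of an appropriate contact invariant (such as the one studied via sutured Floer homology by Ghiggini--Honda--Van Horn-Morris) or a new technical ingredient that does not yet appear in the convex surface theory toolkit. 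Pinpointing and supplying that missing ingredient is, we expect, the principal obstacle that has kept this statement at the level of a conjecture.
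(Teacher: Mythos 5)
This statement is stated in the paper as a \emph{conjecture}: the authors give no proof, and in Remark \ref{rem:sharp} they explicitly say that they have been unable to carry out the state traversal argument even in the special case of Theorem \ref{thm:mainGT}. So the relevant question is whether your proposal closes the gap that the authors could not, and it does not. The construction of $\xi'$ in your first step is fine, and the overall strategy (state traversal across $\partial N_k'$ plus the universal tightness hypothesis) is exactly the approach the authors themselves point to. But the two load-bearing claims in your second and third paragraphs are asserted, not proved. First, the components of $D_{\mathrm{in}}=D\cap N_k'$ need not be disks: an overtwisted disk meeting a separating convex torus can produce annular and more complicated pieces, and even after normalization the state traversal requires controlling an a priori unbounded sequence of bypass attachments (states) on $\partial N_k'$, not a single decomposition of one disk. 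The assertion that universal tightness of $N_k'$ forces these pieces to ``contribute only bypasses of a single consistent sign together with trivial regions'' is precisely the content one would need to prove; Honda's classification of universally tight $T^2\times I$ constrains the dividing-set slopes and signs of basic slices, but it does not by itself classify which bypasses an embedded surface in $N_k'$ can produce along the boundary.

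Second, the transfer step --- absorbing those bypasses into $Y\setminus N_k'$ and deriving a contradiction with the tightness of $(Y,\xi)$ --- is the unsolved gluing problem itself: tightness is not preserved under gluing along convex tori in general, and the hypothesis of the conjecture controls only the toric-annulus side, saying nothing about how the complement reacts to bypass attachment. You acknowledge this, which is honest, but it means the argument is a program rather than a proof. Your alternative covering route also contains an error: a finite cyclic cover of $N_k'\cong T^2\times I$ in a torus direction does not ``unwind'' Giroux torsion, since the twisting occurs in the (simply connected) $I$-direction; no cover of the neighborhood reduces $\xi'$ to $\xi$. In short, the conjecture remains open, and your proposal reproduces the known strategy together with the known obstructions rather than supplying the missing ingredient.
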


\subsection{Organization} The paper is organized as follows. In Section \ref{sec:prelim}, we will recall important facts related to convex surface theory and define the notion of \textit{rigidity}. In Section \ref{sec:blocks}, we will explore contact structures on $\Sigma\times S^1$ and so-called $\textit{maximal chains}$, which will be used in proving Theorem \ref{thm:mainGT}. We then prove Theorem \ref{thm:mainGT} in Section \ref{sec:example}. In Section \ref{sec:stein}, we prove Theorem \ref{thm:mainminimallytwisting} by constructing Stein diagrams. Finally, in Section \ref{nfibers} we discuss contact structures on general plumbed 3-manifolds with no bad vertices.

\subsection{Acknowledgements} The authors would like to thank John Etnyre for many helpful conversations.
The first author was partially supported by the Infosys Fellowship.

\section{Rigidity}\label{sec:prelim}

We assume the reader is familiar with convex surface theory due to Giroux \cite{Gir2} and bypass attachments and edge rounding due to Honda \cite{H1}. For a nice exposition on the basics of convex surface
theory, see \cite{GS}. Here we will recall some notation and important results regarding toric annuli and basic slices that we must gather in order to discuss the notion of \textit{rigidity}. 

For a convex surface $\Sigma$ in a contact 3-manifold, we denote the set of dividing curves by $\Gamma_\Sigma$ and the slope of the dividing curve by $s(\Gamma_\Sigma)$.
Let us consider a tight contact structure $\xi$ on $T^2\times I$. Fix an identification $T^2=\mathbb{R}^2/\mathbb{Z}^2$. Let $s(\Gamma_{T×\{i\}})=s_i$ for $i=0,1$. 
$(T^2\times I,\xi)$ is called a $\textit{basic slice}$ if: $\xi$ is tight; $T_{i}$ are convex and $\#\Gamma_{T_{i}}=2$, for $i=0,1$; the minimal integral representatives of $\mathbb{Z}^{2}$ corresponding to $s_{i}$ (for $i=0,1$) form a $\mathbb{Z}$-basis of $\mathbb{Z}^{2}$; and every convex torus parallel to the boundary has slope between $s_0$ and $s_1$.
After a diffeomorphism of $T^{2}$, we may assume that a basic slice has $s(\Gamma_{(T^2)\times\{1\}})=-1$ and $s(\Gamma_{(T^2)\times\{0\}})=0$.
By \cite{H1}, a basic slice can have two tight contact structures up to isotopy, differentiated by the sign of their relative Euler classes. We will call these positive and negative basic slices.


Given any tight $(T^2\times I,\xi)$, there exists a natural grouping of the basic slice layers into blocks via continued fractions. These blocks are special because the basic slices that are part of the same continued fraction block can be ``shuffled" without changing the contact structures. In \cite{H1}, it is shown that shuffling the basic slices within a given continued fraction block does not change the contact structure. See \cite{H1} for details.

We now are now ready to define rigidity. Consider $\Sigma\times S^1$, where $\Sigma$ is a pair of pants. Identify each boundary component of $\Sigma\times S^1$ with $\RR^2/\ZZ^2$ by choosing $(1,0)^T$ to be the direction given by $-\partial(\Sigma_1\times S^1)$ and $(0,1)^T$ to be the direction given by the $S^1-$fiber. Let $-\partial(\Sigma\times S^1)=T_0+T_1+T_2$. Note that our orientation convention differs from the orientation convention in \cite{H1}.
Suppose $s(\Gamma_{T_i})=\infty$ for all $i$. Glue toric annuli $T_i\times[0,1]$ to $\Sigma\times S^1$ such that $T_i\times\{1\}=T_i$ for $i=0,1,2$. We say that the pair $T_i, T_j$ is \textit{rigid} if the basic slices of $T_i\times[0,1]$ cannot be shuffled so that the innermost basic slices have opposite sign; otherwise we call the pair \textit{nonrigid}.

\begin{remk}
The notion of rigidity can be viewed as a generalization of the notion of \textit{totally 2-inconsistency} as defined and used in \cite{EMM}.
\end{remk}

We now develop conditions under which nonrigid pairs give rise to overtwisted contact structures. This is the main tool we will use to obstruct the tightness of contact structures obtained by adding Giroux torsion.

\begin{lem}
Let $Y=\Sigma\times S^1\cup T_0\times[0,1]\cup T_1\times[0,1]\cup T_2\times[0,1]$ be as above and assume $s(\Gamma_{T_0\times\{0\}})\le1$ and $s(\Gamma_{T_i\times\{0\}})\ge-1$ for $i=1,2$. If $T_i, T_j$ is nonrigid, then there exists a thickening of $T_k\times [0,1]$ (where $k\neq i,j$) to a toric annulus $T_k\times[0,2]\subset Y$ such that $T_k\times[0,2]$ is not minimally twisting.
\label{lem:addtwist}
\end{lem}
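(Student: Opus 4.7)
The plan is to work with $\{i,j\}=\{1,2\}$ and $k=0$; all other cases are symmetric after relabeling. The overall strategy is to use nonrigidity to rearrange the basic slices so that the two innermost ones adjacent to $\Sigma\times S^1$ carry opposite signs, then transfer this information across the pair-of-pants region to produce a new convex torus parallel to $T_0$ whose dividing slope differs from $\infty$, thereby exhibiting a thickening of $T_0\times[0,1]$ inside $Y$ that is not minimally twisting.

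First, I would invoke nonrigidity of the pair $(T_1,T_2)$ to shuffle basic slices within each continued fraction block of $T_1\times[0,1]$ and $T_2\times[0,1]$---an operation that preserves the contact structure up to isotopy by \cite{H1}---until the innermost basic slices $B_1\subset T_1\times[0,1]$ and $B_2\subset T_2\times[0,1]$, i.e.\ the ones sharing a boundary with $\Sigma\times S^1$, have opposite signs. Set $N:=(\Sigma\times S^1)\cup B_1\cup B_2$ and write $T_i^\ast$ for the boundary component of $B_i$ other than $T_i$, so $\partial N=T_0\cup T_1^\ast\cup T_2^\ast$ carries dividing slopes $\infty,\sigma_1,\sigma_2$, where each $\sigma_i$ is a Farey neighbor of $\infty$.

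The core geometric step---and the step I expect to be the main obstacle---is to show that inside $N$ there is a convex torus $T_0'$ isotopic to $T_0$ whose dividing slope $s_0'$ is not $\infty$. My approach would be to take a properly embedded convex annulus $A\subset N$ running from a Legendrian ruling curve on $T_1^\ast$ to one on $T_2^\ast$, obtained by thickening a cocore arc of $\Sigma$; after Legendrian realization and edge rounding on $\partial B_1$ and $\partial B_2$, one can analyze the dividing set of $A$ in the spirit of Honda's Imbalance Principle \cite{H1,H2}. The opposite signs of $B_1$ and $B_2$ should be precisely the condition that forces a boundary-parallel dividing arc on $A$ whose bypass disk is attached along $T_0$; attaching this bypass from the $N$-side of $T_0$ then yields a convex torus $T_0'$ of some slope $s_0'\neq\infty$. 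Confirming that the sign mismatch is exactly what produces this bypass, and that no analogous bypass appears when $B_1,B_2$ share a sign, is the technical heart of the argument.

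Finally, I would concatenate the toric annulus in $N$ cobounded by $T_0$ and $T_0'$ with the original $T_0\times[0,1]$ to form a toric annulus $T_0\times[0,2]\subset Y$ whose boundary slopes are $s_0\le 1$ and $s_0'\neq\infty$, and whose interior contains the convex torus $T_0\times\{1\}=T_0$ of slope $\infty$. Since $s_0'$ is a Farey neighbor of $\infty$ and $s_0\le 1$, the short Farey arc from $s_0$ to $s_0'$ can be chosen so as to avoid $\infty$; the presence of an interior convex torus of slope $\infty$ therefore exhibits a full $\pi$-twist inside $T_0\times[0,2]$ and shows it is not minimally twisting, completing the proof.
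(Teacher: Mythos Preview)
There is a genuine gap in the core geometric step. A boundary-parallel dividing arc on the vertical annulus $A$ you take between $T_1^\ast$ and $T_2^\ast$ produces a bypass whose attaching arc lies on one of the components of $\partial A\subset T_1^\ast\cup T_2^\ast$; that bypass is therefore attached along $T_1^\ast$ or $T_2^\ast$, \emph{not} along $T_0$. The Imbalance-Principle mechanism you invoke can only change the slope of $T_1^\ast$ or $T_2^\ast$ (pushing them back toward slope $\infty$), and never yields a convex torus parallel to $T_0$ with a new slope. So the assertion that ``the sign mismatch forces a bypass disk attached along $T_0$'' fails as stated, and with it the construction of $T_0'$.

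The paper runs through the complementary configuration. After shuffling so that the innermost basic slices $T_i\times[t_i,1]$ (outer slope $-1$, inner slope $\infty$) satisfy the appropriate sign condition, Lemma~4.13 of \cite{GS} is invoked to obtain a vertical annulus $A$ between $T_1\times\{t_1\}$ and $T_2\times\{t_2\}$ with \emph{no} boundary-parallel dividing arcs. One then edge-rounds $T_1\times\{t_1\}\cup A\cup T_2\times\{t_2\}$ to produce a convex torus parallel to $T_0$ of slope exactly $1$. Concatenating with the given $T_0\times[0,1]$ (boundary slopes $s_0\le 1$ and $\infty$) yields a toric annulus with boundary slopes $s_0$ and $1$ containing an interior torus of slope $\infty$, hence not minimally twisting. (On the sign condition: your reading of the definition---nonrigid means one can shuffle to \emph{opposite} innermost signs---is faithful to the text, while the paper's proof shuffles to the \emph{same} sign; whichever is the typo, the operative hypothesis is precisely that of \cite[Lemma~4.13]{GS}, and the mechanism is edge-rounding to get a slope-$1$ torus, not a bypass on $T_0$.)
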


\begin{proof}
 Assume that $T_1, T_2$ is nonrigid; the other cases are analogous. 
 Since $s(\Gamma_{T_i\times\{0\}})\ge -1$ and $s(\Gamma_{T_i\times\{1\}})=s(\Gamma_{T_i})=\infty$ for $i=1,2$, there exists a torus $T_i\times\{t\}$ in between $T_i$ and $T_i\times\{0\}$ with slope $s(\Gamma_{T_i\times\{t_i\}})=-1$ such that $T_i\times[t_i,1]$ is a basic slice. By assumption, we can perform shuffling to ensure that $T_1\times[t_1,1]$ and $T_2\times[t_2,1]$ have the same sign. By Lemma 4.13 in \cite{GS}, there exists a vertical annulus from a Legendrian ruling of $T_1\times\{t_1\}$ to a Legendrian ruling of $T_2\times\{t_2\}$ without boundary parallel boundary curves. Thus we can edge round to obtain a convex torus parallel to $T_0$ with boundary slope $1$. The result follows.
\end{proof}

\begin{prop}[c.f. \cite{EMM}] Let $Y=\Sigma\times S^1\cup T_0\times[0,1]\cup T_1\times[0,1]\cup T_2\times[0,1]$ be as in Lemma \ref{lem:addtwist}.
\begin{enumerate}
    \item Let $Y'=Y\cup_{T_0} D^2\times S^1$. If $T_1, T_2$ is a nonrigid pair, then $Y'$ is overtwisted.
    \item $Y'=Y\cup_{T_0} D^2\times S^1\cup_{T_1} D^2\times S^1$. If $T_i, T_2$ is a nonrigid pair for $i\in\{1,2\}$, then $Y'$ is overtwisted.
    \item $Y'=Y\cup_{T_0} D^2\times S^1\cup_{T_1} D^2\times S^1 \cup_{T_2} D^2\times S^1$. If $T_i, T_j$ is a nonrigid pair for any $i\neq j\in\{0,1,2\}$, then $Y'$ is overtwisted.
\end{enumerate}
    \label{prop:rigid}
\end{prop}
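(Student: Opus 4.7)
The plan is to treat all three parts uniformly: in each case, the hypothesis provides a nonrigid pair $T_i, T_j$ whose complementary torus $T_k$ (with $\{i,j,k\}=\{0,1,2\}$) is among the boundary tori capped off in $Y'$. I will apply Lemma~\ref{lem:addtwist} to this nonrigid pair to obtain a thickened toric annulus $T_k \times [0,2] \subset Y$ that is not minimally twisting, and then exploit this twisting to produce an overtwisted disk inside the solid torus glued along $T_k$.

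First I verify the matching between nonrigid pairs and capped-off tori. In part~(1), only $T_0$ is capped, and the nonrigid pair $T_1, T_2$ has $T_0$ as its complement. In part~(2), interpreting the hypothesis as ``$(T_i, T_2)$ is nonrigid for some $i \in \{0,1\}$,'' the complementary torus $T_{1-i}$ lies in $\{T_0, T_1\}$ and so is capped. In part~(3), all three tori are capped, so the complement of any nonrigid pair is automatically capped.

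The key technical step is then to observe that on the thickened annulus $T_k \times [0,2]$, the dividing slopes of convex tori parallel to $T_k$ realize every slope in $\mathbb{RP}^1$. Indeed, by the proof of Lemma~\ref{lem:addtwist}, the thickening contains a convex torus of slope $1$ (obtained by edge-rounding inside $\Sigma \times S^1$), the original $T_k$ of slope $\infty$, and the outer boundary of slope $\le 1$; so the slopes traverse an arc of length at least $\pi$ in $\mathbb{RP}^1 \cong S^1$, i.e., a full revolution. By the Farey-graph description of tight contact structures on $T^2 \times I$ \cite{H1}, any slope $\mu \in \mathbb{RP}^1$ is then realized by some convex torus parallel to $T_k$ inside this thickening.

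Finally, let $\mu$ denote the meridional slope of the solid torus glued along $T_k \times \{0\}$ in $Y'$, and choose a convex torus $T_k' \subset T_k \times [0,2]$ with dividing slope $\mu$. The region in $Y'$ bounded by $T_k'$ and containing the glued solid torus is itself a solid torus with convex boundary whose dividing slope equals the meridional slope of the interior meridian disk; by Honda's classification of tight contact structures on solid tori \cite{H1}, any such configuration is overtwisted, and hence $Y'$ is overtwisted. The main obstacle I anticipate is the slope-realization step—one must verify that the non-minimal twisting really does produce a convex torus of \emph{any} prescribed dividing slope, and that the realization occurs genuinely inside the ambient $T_k \times [0,2] \subset Y$ rather than in an abstract model. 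This should follow from standard convex surface theory once the coverage of $\mathbb{RP}^1$ by dividing slopes in the thickening is made precise.
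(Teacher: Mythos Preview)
Your proposal is correct and follows essentially the same approach as the paper: apply Lemma~\ref{lem:addtwist} to obtain a non-minimally twisting toric annulus parallel to the capped torus $T_k$, realize the meridional slope by a convex torus inside it, and conclude overtwistedness. The only minor difference is organizational---the paper proves part~(1) exactly as you do and then deduces (2) and (3) by removing the extra solid tori (an overtwisted codimension-zero submanifold forces the ambient manifold to be overtwisted), whereas you repeat the argument uniformly for each case.
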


\begin{proof}
(1) Since $T_1, T_2$ is a nonrigid pair, by Lemma \ref{lem:addtwist}, there exists a nonminimally twisting toric annulus parallel to $T_0$. Hence there  exists a torus parallel to $T_0$ whose dividing curve maps to the meridian of $\partial (D^2\times S^1)$. It follows that $Y$ is overtwisted.

(2) and (3) now follow from (1). Indeed, we can remove the appropriate $D^2\times S^1s$ to obtain the manifold under consideration in part (1), which is overtwisted.
\end{proof}

\section{Maximal Chains}\label{sec:blocks}

If $Y$ is a plumbed 3-manifold whose vertices have valence at most 3, then it can be decomposed into various building blocks. In this section we will discuss \textit{maximal chains} (defined below). 
Before discussing maximal chains, we first prove the following fact about the boundary slopes of $\Sigma\times S^1$, where each boundary component is identified with $\RR^2/\ZZ^2$ as in Section \ref{sec:prelim}.

\begin{lem} Let $-\partial(\Sigma\times S^1)=T_0+T_1+T_2$.
Suppose $s(\Gamma_{T_0})\ge1$, $s(\Gamma_{T_i})\ge -1$ and $\#\Gamma_{T_i}=2$ for $i=1,2$. Then there exists a convex torus $\tilde{T}_i$ parallel to $T_i$ for $i=1,2$ with $s(\Gamma_{\tilde{T_i}})=-1$ and $\#\Gamma_{\tilde{T_i}}=2$, and a convex torus $\tilde{T}_0$ parallel to $T_0$ with $s(\Gamma_{\tilde{T}_0})=1$ and $\#\Gamma_{\tilde{T_0}}=2$.
\label{lem:sT_0=1}
\end{lem}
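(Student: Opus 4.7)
The plan is to iteratively thicken each boundary component of $\Sigma \times S^1$ into the interior by attaching bypasses extracted from vertical convex annuli, until the dividing slopes reach the prescribed values $-1$ (on $T_1, T_2$) and $1$ (on $T_0$). Throughout I will invoke Honda's bypass attachment principle from \cite{H1}: attaching a bypass to a convex torus carrying two dividing curves of slope $s$ produces a parallel convex torus still carrying two dividing curves of slope $s'$, where $s'$ is connected to $s$ by an edge of the Farey graph on the appropriate side. Crucially, this preserves $\#\Gamma = 2$, matching the conclusion of the lemma.

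First I would handle $T_1$ and $T_2$. Fix $i \in \{1,2\}$; if $s(\Gamma_{T_i}) = -1$ set $\tilde T_i := T_i$, otherwise $s(\Gamma_{T_i}) > -1$ along the Farey arc from $-1$ to $\infty$. I would Legendrian-realize a ruling on $T_i$ (say of slope $\infty$) together with a compatible ruling on $T_0$, and build a vertical convex annulus $A_i$ with these curves as Legendrian boundary. A geometric intersection count of $\partial A_i$ with $\Gamma_{T_i}$, combined with tightness of $(\Sigma \times S^1, \xi)$, forces $\Gamma_{A_i}$ to contain a boundary-parallel arc on the $T_i$ side. The associated bypass lies on the interior side of $T_i$, and its attachment thickens $T_i$ inward by a basic slice whose inner boundary has slope one Farey step closer to $-1$. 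Iterating finitely many times yields $\tilde T_i$. Next, with $T_1$ and $T_2$ replaced by $\tilde T_1$ and $\tilde T_2$, I would repeat the argument for $T_0$: a vertical convex annulus from $T_0$ to $\tilde T_1$ (now of slope $-1$) yields a bypass from a boundary-parallel arc on the $T_0$ side that steps the slope from $s(\Gamma_{T_0}) \ge 1$ one Farey edge closer to $1$, and iterating produces $\tilde T_0$.

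The main obstacle will be verifying that each vertical annulus $A_i$ really does contain a boundary-parallel dividing arc on the correct side \emph{and} that the resulting bypass moves the slope in the desired direction in the Farey graph. The first point is the familiar tightness argument (analogous to the proof of Lemma 4.13 of \cite{GS}); the second demands a careful choice of Legendrian ruling slopes on both ends of the annulus, so that the bypass edge in the Farey tessellation sits between the current slope and the target. A secondary subtlety is that after each bypass attachment the torus $T_i$ is replaced by a parallel copy with a new slope, and the annulus must be re-extracted in the updated setting; induction on the (finite) number of Farey vertices between $s(\Gamma_{T_i})$ and $-1$ closes the process.
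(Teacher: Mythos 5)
There is a genuine gap: your iteration relies on bypasses that need not exist. The imbalance principle only guarantees a boundary-parallel dividing arc on the $T_i$ side of a vertical annulus when $|\partial A_i\cap\Gamma_{T_i}|$ strictly exceeds the corresponding count on the other boundary component. A vertical Legendrian ruling curve meets the two dividing curves of a convex torus of slope $p/q$ in $2|q|$ points, so whenever both ends of your annulus lie on tori of integer slope the counts balance at $2=2$ and no bypass is guaranteed. Your step for $T_0$ fails already for $s(\Gamma_{T_0})=2$: a vertical annulus from $T_0$ to $\tilde T_1$ (slope $-1$) is balanced, the iteration stalls, and you cannot conclude that a slope-$1$ torus parallel to $T_0$ exists. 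The same problem afflicts the $T_1,T_2$ step (e.g.\ $s(\Gamma_{T_1})=0$, $s(\Gamma_{T_0})=2$). A ``careful choice of Legendrian ruling slopes'' cannot rescue this: on a vertical annulus the ruling direction is forced to be the $S^1$-fiber at both ends, and bypasses attached along vertical rulings move the dividing slope toward $\infty$ along Farey edges, not toward $-1$ or $1$; there is no reason the resulting sequence of slopes lands exactly on the target rather than jumping past it.

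The missing ingredient is edge rounding, which is how the slope of $T_0$ is actually controlled: the three boundary slopes of $\Sigma\times S^1$ are not independent, and the paper never attaches bypasses to $T_0$ at all in the main case. The paper first runs the vertical annulus between $T_1$ and $T_2$ (not from each $T_i$ to $T_0$) and attaches all available bypasses; at termination it argues that either both slopes are $-1$ or both are $\infty$. In the first case, rounding the edges of $\overline T_1\cup A\cup\overline T_2$ produces a convex torus parallel to $T_0$ of slope $\le 1$, and the slope-$1$ torus $\tilde T_0$ is then obtained as an intermediate convex torus in the resulting toric annulus (every slope between the two boundary slopes is realized), with a further annulus-and-edge-rounding pass to fix $T_1$ and $T_2$ at slope $-1$. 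In the $\infty$ case, the paper takes an annulus from a ruling of $T_0$ to a Legendrian \emph{divide} of $\overline T_1$ --- there the imbalance principle genuinely applies, since the divide is disjoint from $\Gamma_{\overline T_1}$ --- to push $T_0$ to slope $\infty$, and again extracts all three desired tori as intermediate tori. Your outline would need to be restructured around these two mechanisms (edge rounding plus intermediate tori in toric annuli) to close the gap.
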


\begin{proof}
Let $A$ be a vertical annulus from a Legendrian ruling of $T_1$ to a Legendrian ruling of $T_2$. Iteratively add bypasses in $A$ to either $T_1$ or $T_2$ until there are no more bypasses. Let $\overline{T}_i$ denote the resulting torus parallel to $T_i$; note that $s(\Gamma_{\overline{T}_i})\le s(\Gamma_{T_i})$ and $\#\Gamma_{\overline{T}_i}=2$ or $i=1,2$. 

Note that either $s(\Gamma_{\overline{T}_i})=-1$ for $i=1,2$ or $s(\Gamma_{\overline{T}_i})=\infty$ for $i=1,2$. Assume the latter case first. Let $A$ be a vertical annulus from a Legendrian ruling of $T_0$ to a Legendrian divide of $\overline{T}_1$. Then we may iteratively add bypasses along $T_0$ until we obtain a torus $\overline{T}_0$ parallel to $T_0$ with boundary slope $\infty$ and two dividing curves. Hence there exist tori $\tilde{T}_i$ between $T_i$ and $\overline{T}_i$ with $s(\Gamma_{\tilde{T}_0})=1$, $s(\Gamma_{\tilde{T}_i})=-1$ for $i=1,2$, and $\#\Gamma_{\tilde{T}_i}=2$ for all $i$.

Now assume that $s(\Gamma_{\overline{T}_i})\ge-1$ for $i=1,2$; then
since any vertical annulus between $\overline{T}_1$ and $\overline{T}_2$ only has two dividing curves and no bypasses, edge rounding using $\overline{T}_i$ and $A$ yields a torus $\overline{T}_0$ parallel to $T_0$ with boundary slope $s(\Gamma_{\overline{T}_0})\le 1$; it follows that there exists a convex torus $\tilde{T}_0$ in between $T_0$ and $\overline{T}_0$ with boundary slope $1$ and $\#\Gamma_{\tilde{T_0}}=2$. 
Next take a vertical annulus between $\tilde{T}_0$ and $\overline{T}_1$. Once again, add all possible bypasses in a vertical annulus between the Legendrian rulings. Note that if we obtain tori with boundary slope $\infty$, then we may proceed as above. Assume this is not the case so that we obtain a torus $\tilde{T}_1$ parallel to $\overline{T}_1$ boundary slope $-1$.
We can then edge round using $\tilde{T}_1$ and $\tilde{T}_0$ to find a torus $\tilde{T}_2$ parallel to $\overline{T}_2$ with boundary slope $-1$ and two dividing curves. 
\end{proof}

Let $L=L(p,q)$ and $$\frac{p}{q}=[a_1,\ldots,a_n] \,\,\,\text{ and }\,\,\, \frac{p'}{q'}=[a_1,\ldots,a_{n-1}]$$ be negative continued fraction expansions.
$L$ has linear plumbing diagram with weights $(-a_1,\ldots,a_n)$.
$L$ can be constructed by gluing two copies of $D^2\times S^1$. Identify $\partial(D^2\times S^1)=S^1\times S^1$ with $\RR^2/\ZZ^2$ by choosing $(1,0)^T$ to be the direction given by the meridian and let $(0,1)^T$ be given by the longitudinal direction. Then by \cite{N} the gluing map (after switching the factors of $S^1\times D^2$) is given by 
$$\begin{bmatrix}-p' & -q'\\p&q \end{bmatrix}.$$

Take a pair of disjoint simple closed curves of the form $pt\times S^1$ in each copy of $D^2\times S^1$ and remove open neighborhoods of these curves. The result is diffeomorphic to $\Sigma\times S^1$, where $\Sigma$ is a pair of pants. Denote the first copy by $\Sigma_1\times S^1$ and the second copy by $\Sigma_2$. Identify each boundary torus of $\Sigma_1\times S^1$ with $\RR^2/\ZZ^2$ by choosing $(1,0)^T$ to be the direction given by $-\partial(\Sigma_1\times S^1)$ and $(0,1)^T$ to be the direction given by the $S^1-$fiber. Note that this orientation agrees with the orientation chosen on the original copies of $D^2\times S^1$. 
Let $-\partial(\Sigma_i\times S^1)=T^i_0+T^i_1+T^i_2$, where $T^i_0$ is the boundary of the original $D^2\times S^1$. 
Then the gluing map 
$g:T_0^1\to T_0^2$ is given by 
$$\begin{bmatrix}-p' & q'\\-p&q \end{bmatrix}.$$
The resulting 3-manifold $C$ is a plumbed 3-manifold with four boundary components, which we draw schematically in Figure \ref{fig:maxchainlocal}.
If $C$ is embedded in an ambient plumbed 3-manifold, we call $C$ a maximal chain.
We aim to understand the boundary slopes of maximal chains. Since maximal chains are built out of pairs of pants, we first show the following.

\begin{figure}
\centering 
\begin{tikzpicture}[dot/.style = {circle, fill, minimum size=1pt, inner sep=0pt, outer sep=0pt}]
\tikzstyle{smallnode}=[circle, inner sep=0mm, outer sep=0mm, minimum size=2mm, draw=black, fill=black];
\node[smallnode, label={$-a_1$}] (a) at (0,0) {};
\node[smallnode, label={$-a_2$}] (b) at (1,0) {};
\node[smallnode, label={$-a_{n-1}$}] (e) at (3,0) {};
\node[smallnode, label={$-a_n$}] (f) at (4,0) {};

\node[] (c) at (1.5,0) {};
\node[] (d) at (2.5,0) {}; 
\node[smallnode, draw=black, fill=white] (a1) at (-.7,.7) {};
\node[smallnode, draw=black, fill=white] (a2) at (-.7,-.7) {};
\node[smallnode, draw=black, fill=white] (f1) at (4.7,.7) {};
\node[smallnode, draw=black, fill=white] (f2) at (4.7,-.7) {};

\draw[-] (a) -- (b);
\draw[-] (b) -- (c);
\draw[dashed] (b) -- (e);
\draw[-] (d) -- (e);
\draw[-] (e) -- (f);

\draw[-] (a) -- (a1);
\draw[-] (a) -- (a2);
\draw[-] (f) -- (f1);
\draw[-] (f) -- (f2);

\end{tikzpicture}
\caption{Maximal Chain in $Y$}\label{fig:maxchainlocal}
\end{figure}
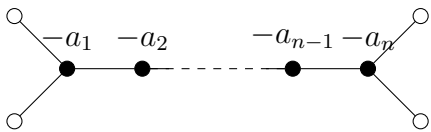

\begin{lem}
Suppose $s(\Gamma_{T_i^j})\ge-1$ and $\#\Gamma_{T_i^j}=1$ for all $i,j\in\{1,2\}$. Then for each $i,j$, there exists a convex torus $\tilde{T}_i^j$ parallel to $T_i^j$ with boundary slope $s(\Gamma_{\tilde{T}_i^j})=-1$. Moreover, there exists disjoint convex tori $\tilde{T}_0^j$ (for $j=1,2$) parallel to $T_0^j$ with boundary slope $s(\Gamma_{\tilde{T}_0^j})=1$ .
\label{lem:maxchainupperbound}
\end{lem}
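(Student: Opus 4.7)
The plan is to imitate the proof of Lemma \ref{lem:sT_0=1} inside each pair of pants $\Sigma_j \times S^1$ ($j = 1,2$) composing the maximal chain $C$, while exploiting the gluing along $T_0 = T_0^1 = T_0^2$ to handle the shared internal torus.

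For each $j$, I would first take a vertical annulus $A^j \subset \Sigma_j \times S^1$ running from a Legendrian ruling of $T_1^j$ to one of $T_2^j$ and iteratively add all available bypasses, as in the opening of the proof of Lemma \ref{lem:sT_0=1}. This yields convex tori $\overline{T}_1^j, \overline{T}_2^j$ parallel to $T_1^j, T_2^j$ with $\#\Gamma = 2$ and slopes that are linked: either both equal to $-1$, or both equal to $\infty$. In the $\infty$ case I would follow Case A of Lemma \ref{lem:sT_0=1}: a vertical annulus from $T_0^j$ to $\overline{T}_1^j$ gives a parallel copy $\overline{T}_0^j$ of $T_0^j$ with slope $\infty$. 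In the $-1$ case I would edge round using $A^j$ together with $\overline{T}_1^j, \overline{T}_2^j$ to produce $\overline{T}_0^j$ parallel to $T_0^j$ with slope at most $1$ in $\Sigma_j$ coordinates.

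The new essential step, and the main obstacle, is to upgrade $\overline{T}_0^j$ to a parallel copy $\tilde{T}_0^j$ of slope \emph{exactly} $1$ in $\Sigma_j$ coordinates, simultaneously for both $j$, and with the two copies disjoint. Unlike in Lemma \ref{lem:sT_0=1}, the hypothesis does not supply $s(\Gamma_{T_0^j}) \ge 1$ a priori, so one cannot simply interpolate between $T_0^j$ and $\overline{T}_0^j$. Here I would exploit the gluing: the slope-$\le 1$ torus $\overline{T}_0^j$ coming from the $\Sigma_j$ side should be complemented by a parallel torus coming from the $\Sigma_{3-j}$ side whose slope, after translation by the gluing map $g$ described above, is at least $1$ in $\Sigma_j$ coordinates. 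By computing the action of $g$ on slopes and applying the bypass/edge-rounding machinery from the previous paragraph on the $\Sigma_{3-j}$ side as well, one verifies that a convex torus of slope exactly $1$ in $\Sigma_j$ coordinates always exists inside the toric annulus straddling $T_0$. Placing $\tilde{T}_0^1$ and $\tilde{T}_0^2$ on opposite sides of $T_0$ then makes the pair disjoint.

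With the slope-$1$ tori $\tilde{T}_0^j$ in hand, I would close by mimicking the final step of the proof of Lemma \ref{lem:sT_0=1}: take a vertical annulus from $\tilde{T}_0^j$ to $\overline{T}_1^j$ and add bypasses to find $\tilde{T}_1^j$ parallel to $T_1^j$ with slope $-1$; then edge round between $\tilde{T}_0^j$ and $\tilde{T}_1^j$ to obtain $\tilde{T}_2^j$ with slope $-1$. All of the difficulty is concentrated in the slope-transport argument of the preceding paragraph; the remaining steps are routine adaptations of the proof of Lemma \ref{lem:sT_0=1}.
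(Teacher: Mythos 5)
Your proposal follows essentially the same route as the paper: produce a slope-$\le 1$ parallel copy of $T_0^1$ by bypass attachment and edge rounding inside $\Sigma_1\times S^1$, transport it through the gluing matrix to see it as a slope-$\ge 1$ torus from the $\Sigma_2$ side, interpolate to get a slope-$1$ torus, and recut to repeat on the other side (which also yields the disjointness); the paper simply packages the interpolation steps as an appeal to Lemma~\ref{lem:sT_0=1}. The slope computation you leave implicit is exactly the paper's $s(\Gamma_{\hat{T}_0^2})=\frac{bp-aq}{bp'-aq'}\ge 1$ when $s(\Gamma_{\hat{T}_0^1})=\frac{a}{b}\le 1$.
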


\begin{proof}
By Lemma \ref{lem:sT_0=1}, it suffices to show that there exists a torus $\tilde{T}_0^j$ parallel $T_0^j$ with boundary slope at least 1 for $j=1,2$.
Let $A_j$ be a vertical annulus from a Legendrian ruling of $T_1^j$ to a Legendrian ruling of $T_2^j$. Iteratively attach all possible bypasses in $A_j$ along $\tilde{T}_1^j$ and $\tilde{T}_2^j$ until there are no further bypasses; note that when this process terminates, the boundary slope of the resulting tori $\hat{T}_i^j$ is $s(\Gamma_{\hat{T}_i^j})\ge-1$. 
Edge rounding yields a torus $\hat{T}_0^j$ parallel to $T_0^j$ with slope $s(\Gamma_{\hat{T}_0^j})\le 1$. Let $s(\Gamma_{\hat{T}_0^1})=\frac{a}{b}.$

We will now recut $Y$ along a couple of times; this procedure is drawn schematically in Figure \ref{fig:recutY}.
First, recut $Y$ along $\hat{T}_0^1$ and glue the resulting toric annulus $X$ to $\Sigma_2\times S^1$ along their common boundary (see second picture in Figure \ref{fig:recutY}). 
This provides a thickened copy of $\Sigma_2\times S^1$ with boundary components $T_1^2$, $T_2^2$, and $\hat{T}_0^2$, where $\hat{T}_0^2$ is the image of $\hat{T}_0^1$ under the gluing map.
Then $s(\Gamma_{\hat{T}_0^2})=\frac{bp-aq}{bp'-aq'}\ge1$. It follows that there exists a convex torus between $T_0^2$ and $\hat{T}_0^2$, which we denote by $\tilde{T}_0^2$ with slope $1$.
Once again, recut $Y$ along $\tilde{T}_0^2$ (see the third picture in Figure \ref{fig:recutY}). By a similar argument, there exist a convex torus $\tilde{T}_0^1$ parallel to $T_0^1$ with boundary slope 1. 
\end{proof}

\begin{figure}
\centering
\begin{overpic}
[scale=.8]{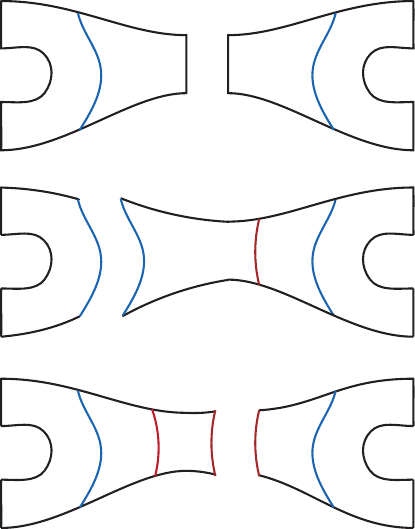}
\put(-7,73.5){$\hat{T}_1^1$}
\put(-7,93){$\hat{T}_2^1$}
\put(-7,38){$\hat{T}_1^1$}
\put(-7,57){$\hat{T}_2^1$}
\put(-7,3){$\hat{T}_1^1$}
\put(-7,22){$\hat{T}_2^1$}

\put(80,73.5){$\hat{T}_1^2$}
\put(80,93){$\hat{T}_2^2$}
\put(80,38){$\hat{T}_1^2$}
\put(80,57){$\hat{T}_2^2$}
\put(80,3){$\hat{T}_1^2$}
\put(80,22){$\hat{T}_2^2$}

\put(32,75){$T_0^1$}
\put(42,75){$T_0^2$}
\put(15,68){{\color{teal}$\hat{T}_0^1$}}
\put(60,68){{\color{teal}$\hat{T}_0^2$}}

\put(15,33){{\color{teal}$\hat{T}_0^1$}}
\put(60,33){{\color{teal}$\hat{T}_0^2$}}
\put(46,38){{\color{purple}$\tilde{T}_0^2$}}

\put(46,3){{\color{purple}$\tilde{T}_0^2$}}
\put(27,2.5){{\color{purple}$\tilde{T}_0^1$}}

\end{overpic}
\caption{Recutting $Y$}\label{fig:recutY}
\end{figure}

\section{An Example}\label{sec:example}

Let $Y$ be the plumbing 3-manifold shown in Figure \ref{fig:plumbing}. 

\begin{alignat*}{2}
& \frac{p}{q}=[a_1,\ldots,a_n] \qquad && \frac{p'}{q'}=[a_1,\ldots,a_{n-1}]\\
& \frac{x_i}{y_i}=[b_1^i,\ldots,b_{k_i}^i] \qquad && \frac{x_i'}{y_i'}=[b_1^i,\ldots,b_{k_i-1}^i]
\end{alignat*}

Decompose $Y$ into the gluing of four solid tori $V_1,\ldots,V_4$ and the maximal chain $Y'$. Let $Y'=(\Sigma_1\times S^1)\cup_g( \Sigma_2\times S^1)$, where $g:T_0^1\to T_0^2$ is given by 
$$\begin{bmatrix}-p' & q'\\-p&q \end{bmatrix}.$$
Let $-\partial (\Sigma_1\times S^1)=T_0^1+T_1+T_2$ and $-\partial (\Sigma_2\times S^1)=T_0^1+T_3+T_4$
Then the solid torus $V_i$ for $i\in\{1,2,3,4\}$ is glued to $\Sigma_1\times S^1$ via the map $g:\partial V_i\to T_i$ given by 
$$\begin{bmatrix}x_i & x_i'\\-y_i&-y_i' \end{bmatrix}.$$

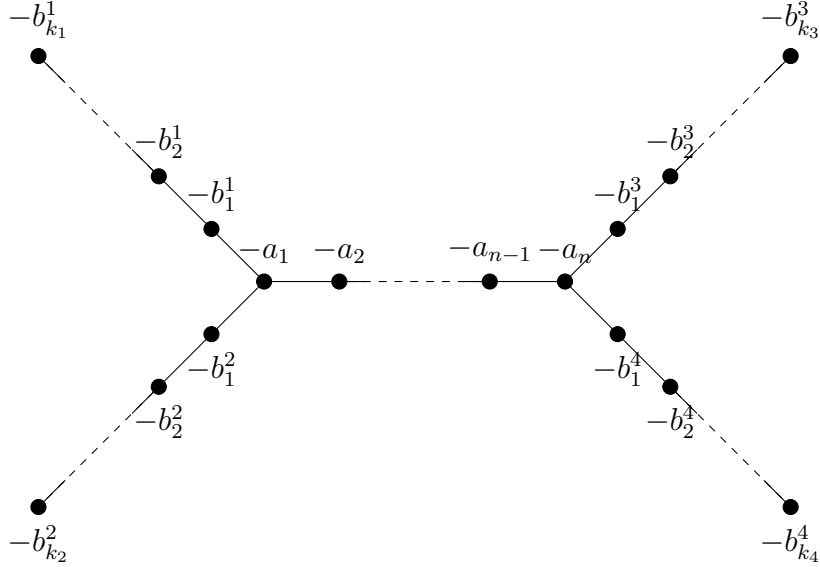
\begin{figure}
\centering 
\begin{tikzpicture}[dot/.style = {circle, fill, minimum size=1pt, inner sep=0pt, outer sep=0pt}]
\tikzstyle{smallnode}=[circle, inner sep=0mm, outer sep=0mm, minimum size=2mm, draw=black, fill=black];

\node[smallnode, label={$-a_1$}] (a) at (0,0) {};
\node[smallnode, label={$-a_2$}] (b) at (1,0) {};
\node[smallnode, label={$-a_{n-1}$}] (e) at (3,0) {};
\node[smallnode, label={$-a_n$}] (f) at (4,0) {};
\node[] (c) at (1.5,0) {};
\node[] (d) at (2.5,0) {}; 

\node[smallnode, label={$-b_1^1$}] (a11) at (-.7,.7) {};
\node[smallnode, label={$-b_2^1$}] (a12) at (-1.4,1.4) {};
\node[] (aa) at (-1.9,1.9) {};
\node[] (ab) at (-2.5,2.5) {}; 
\node[smallnode, label={$-b_{k_1}^1$}] (a13) at (-3,3) {};

\node[smallnode, label=below:{$-b_1^2$}] (b11) at (-.7,-.7) {};
\node[smallnode, label=below:{$-b_2^2$}] (b12) at (-1.4,-1.4) {};
\node[] (ba) at (-1.9,-1.9) {};
\node[] (bb) at (-2.5,-2.5) {}; 
\node[smallnode, label=below:{$-b_{k_2}^2$}] (b13) at (-3,-3) {};

\node[smallnode, label={$-b_1^3$}] (c11) at (4.7,.7) {};
\node[smallnode, label={$-b_2^3$}] (c12) at (5.4,1.4) {};
\node[] (ca) at (5.9,1.9) {};
\node[] (cb) at (6.5,2.5) {}; 
\node[smallnode, label={$-b_{k_3}^3$}] (c13) at (7,3) {};

\node[smallnode, label=below:{$-b_1^4$}] (d11) at (4.7,-.7) {};
\node[smallnode, label=below:{$-b_2^4$}] (d12) at (5.4,-1.4) {};
\node[] (da) at (5.9,-1.9) {};
\node[] (db) at (6.5,-2.5) {}; 
\node[smallnode, label=below:{$-b_{k_4}^4$}] (d13) at (7,-3) {};

\draw[-] (a) -- (b);
\draw[-] (b) -- (c);
\draw[dashed] (b) -- (e);
\draw[-] (d) -- (e);
\draw[-] (e) -- (f);

\draw[-] (a) -- (a11);
\draw[-] (a) -- (a2);
\draw[-] (f) -- (f1);
\draw[-] (f) -- (f2);

\draw[-] (a11) -- (a12);
\draw[-] (a12) -- (aa);
\draw[dashed] (a12) -- (a13);
\draw[-] (ab) -- (a13);

\draw[-] (b11) -- (b12);
\draw[-] (b12) -- (ba);
\draw[dashed] (b12) -- (b13);
\draw[-] (bb) -- (b13);

\draw[-] (c11) -- (c12);
\draw[-] (c12) -- (ca);
\draw[dashed] (c12) -- (c13);
\draw[-] (cb) -- (c13);

\draw[-] (d11) -- (d12);
\draw[-] (d12) -- (da);
\draw[dashed] (d12) -- (d13);
\draw[-] (db) -- (d13);

\end{tikzpicture}
\caption{Plumbed 3-manifold $Y$}\label{fig:plumbing}
\end{figure}

We may isotope the core of $V_i$ so that it has very negative twisting number $-m<<0$. Then we may take $V_i$ to be a standard tubular neighborhood of the core with boundary slope $s(\Gamma_{\partial V_i})=-\frac{1}{m}$ and two dividing curves. It follows that $-1<s(\Gamma_{T_i})=-\frac{my_i-y'_i}{mx_i-x'_i}<0$. By Lemma \ref{lem:maxchainupperbound}, there exists a convex torus parallel to $T_i$ with two dividing curves and boundary slope $-1$. Recut $Y$ along this torus and label it $T_i$; hence $s(\Gamma_{T_i})=-1$ for all $i$. Moreover, by Lemma \ref{lem:maxchainupperbound}, there exist a torus $\tilde{T}_0^i$ parallel to $T_0^i$ with two dividing curves and boundary slope $1$. Recut $Y$ along $T_0^1$. Then the boundary slopes of $\Sigma_1\times S^1$ are $-1,-1,1$ and the boundary slopes of $\Sigma_2\times S^1$ are $-1,-1,\frac{p-q}{p'-q'}$; moreover, there exists a toric annulus $T^2\times[0,1]\subset \Sigma_2\times S^1$ such that $T^2\times\{0\}=T_0^2$ and $T^2\times\{1\}=\tilde{T}_0^2$. Let $\Sigma_2'\times S^1$ be such that $\Sigma_2\times S^1=(\Sigma_2'\times S^1)\cup_{\tilde{T}_0^2}T^2\times[0,1]$.

\begin{lem}
If $\Sigma_1\times S^1$ or $\Sigma_2\times S^1$ contains a vertical Legendrian with twisting number 0, then $(Y,\xi)$ is not minimally twisting.
\label{lem:twisting}
\end{lem}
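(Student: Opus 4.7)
Suppose without loss of generality that $\Sigma_1 \times S^1$ contains a vertical Legendrian $L$ with twisting number $0$; the argument for $\Sigma_2 \times S^1$ is analogous. The strategy is to use $L$ to produce a convex torus of slope $\infty$ parallel to a boundary torus of $\Sigma_1 \times S^1$, and then to combine this with the adjacent region of $Y$ to exhibit a toric annulus with $\tfrac{1}{2}$-twisting in a neighborhood of an incompressible torus of $Y$.

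First we take a standard convex neighborhood $N(L) \cong D^2 \times S^1$ of $L$ whose $S^1$-factor is in the fiber direction. Since $\mathrm{tw}(L) = 0$, the convex boundary $\partial N(L)$ carries two dividing curves parallel to the fiber, i.e., of slope $\infty$ in the framing where $(1,0)$ is the meridian of $N(L)$ and $(0,1)$ is the fiber. Next we take a convex vertical annulus $A$ in $\Sigma_1 \times S^1 \setminus \mathrm{int}(N(L))$ whose boundary consists of a Legendrian ruling of slope $0$ on $\partial N(L)$ and a Legendrian ruling of slope $0$ on the boundary torus $T_0^1$ (whose dividing slope is $1$). Applying Honda's Imbalance Principle to $A$, bypass half-disks are available along $T_0^1$; attaching one such bypass produces a convex torus $\tilde T_0^1$ parallel to $T_0^1$ inside $\Sigma_1 \times S^1$ with dividing slope $\infty$, since $1$ and $\infty$ are Farey-adjacent.

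Finally, we consider the adjacent region on the $\Sigma_2 \times S^1$ side of $T_0^1 = T_0^2$. By Lemma \ref{lem:maxchainupperbound}, there exists a convex torus $\tilde T_0^2$ parallel to $T_0^2$ with dividing slope $1$ in $T_0^2$-coordinates. Using the gluing matrix, the slope $\infty$ of $\tilde T_0^1$ (in $T_0^1$-coordinates) corresponds to slope $q/q'$ in $T_0^2$-coordinates. The dividing slopes of the toric annulus neighborhood of $T_0^1$ in $Y$ formed by $\tilde T_0^1$, $T_0^1$, and $\tilde T_0^2$ then trace the sequence $q/q',\ \tfrac{p-q}{p'-q'},\ 1$ in $T_0^2$-coordinates; one checks that this Farey-tessellation path is strictly longer than the minimally twisting path between these slopes, in fact containing a sub-path of rotation at least $\pi$. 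This exhibits $\tfrac{1}{2}$-twisting in a toric annulus neighborhood of the incompressible torus $T_0^1$, so $(Y,\xi)$ is not minimally twisting.

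The main obstacle is the bypass step producing $\tilde T_0^1$ of slope $\infty$: one must verify via the Imbalance Principle that the dividing set on $A$ contains the required bypass half-disks, exploiting the mismatch between the slopes $\infty$ on $\partial N(L)$ and $1$ on $T_0^1$. A secondary obstacle is the Farey-length verification, which relies on Honda's classification of tight contact structures on toric annuli together with careful bookkeeping in the gluing data; alternatively, one could directly detect the $\tfrac{1}{2}$-twisting by constructing two parallel convex tori of the same dividing slope inside this toric annulus and showing the contact structure between them has rotation at least $\pi$.
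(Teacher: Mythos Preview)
Your argument contains a genuine gap at the final step: the claim that the toric annulus bounded by $\tilde T_0^1$ and $\tilde T_0^2$ exhibits $\tfrac12$-twisting is false in general. In $T_0^2$-coordinates the three slopes you list satisfy
\[
\frac{q}{q'} \;>\; \frac{p-q}{p'-q'} \;\ge\; 1,
\]
since $(p-q)q' - (p'-q')q = pq' - p'q = -1$ and $(p-q)-(p'-q')\ge 0$ (with equality, e.g., when $p/q=[2,2]$). Thus the intermediate slope lies on the short Farey arc between the two boundary slopes, and the concatenation of your basic slice with the pre-existing $T^2\times I\subset \Sigma_2\times S^1$ can be---and in the minimally twisting case will be---a minimally twisting toric annulus from $q/q'$ to $1$. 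Concretely, for $p/q=[2,2]$ you get the sequence $2,1,1$; for $p/q=[3,3]$ you get $3,\tfrac52,1$. Neither contains any excess rotation. So the ``one checks'' step does not check out.

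The paper's proof avoids this by working entirely inside $\Sigma_1\times S^1$ and using the rigidity obstruction. One first uses the vertical Legendrian to produce tori of slope $\infty$ parallel to \emph{all three} boundary components $T_0^1,T_1,T_2$, yielding basic slices $T_i\times[0,1]$ for $i=1,2$. Tightness of $(Y,\xi)$ together with \propref{prop:rigid} (applied with the solid tori $V_1,V_2$ capping $T_1,T_2$) forces these two basic slices to have the same sign. One then edge-rounds across a vertical annulus between $T_1$ and $T_2$ (via Lemma~4.13 of \cite{GS}) to locate a \emph{second} convex torus of slope $1$ parallel to $T_0^1$, lying on the far side of the slope-$\infty$ torus $\hat T_0^1$. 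The resulting layered toric annulus has slopes $1,\infty,1$, which is manifestly not minimally twisting. The missing ingredient in your approach is precisely this second slope-$1$ torus, whose existence requires the rigidity argument and cannot be replaced by the pre-existing $\tilde T_0^2$ on the $\Sigma_2$-side.
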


\begin{proof}
    Without loss of generality, assume $\Sigma_1\times S^1$ has a vertical Legendrian $\gamma$. Let $T_0:=T_0^1$. Then we can take vertical annuli from $\gamma$ to a Legendrian ruling of $T_i$ for all $i$. Then we can attach bypasses to each torus until we obtain parallel tori $\hat{T}_0^1, \hat{T}_1$, and $\hat{T}_2$ with infinite boundary slope. 

    Let $T_i\times[0,1]$ denote the toric annulus in $\Sigma_1\times S^1$ with $T_i\times\{0\}=T_i$ and $T_i\times\{1\}=\hat{T}_i$. Note that for $i=1,2$, $T_i\times[0,1]$ is a basic slice. 
    It now follows from Corollary \ref{prop:rigid} that $T_0,T_i$ must be a nonrigid pair for $i=1,2$; hence $T_1,T_2$ must be a rigid pair.
    Thus the signs of the basic slices $T_1\times[0,1]$ and $T_2\times[0,1]$ must be the same and different than the sign of the outermost basic slice of $T_0\times[0,1]$. 

    Now applying Lemma 4.13 in \cite{GS} to $T_1$ and $T_2$ allows us to find a convex torus $\tilde{T}_0$ parallel to $\hat{T}_0$ with slope $1$. Hence there exists toric annulus $T_0\times[0,2]$ such that $T_0\times\{0\}=T_0$, $T_0\times\{1\}=\hat{T}_0$, and $T_2\times\{2\}=\tilde{T}_0$. Hence $\Sigma_1\times S^1$ is not minimally twisting.
\end{proof}
 
We are now ready to prove Theorem \ref{thm:mainex}; we break it into the following two Propositions.

\begin{prop} $Y$ admits at most $$(a_1-1)\cdots(a_n-1)\prod_{i=1}^4(b_1^i-1)\cdots(b^i_{k_i}-1)$$
minimally twisting tight contact structures.
    \label{prop:mintwistex}
\end{prop}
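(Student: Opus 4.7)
The plan is to use the decomposition preceding the proposition to reduce the upper bound to Honda's classification on a small collection of building blocks (four solid tori, two pieces of the form $\Sigma \times S^1$ for $\Sigma$ a pair of pants, and one toric annulus), and then to multiply the resulting piecewise bounds.

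First I would apply Lemma \ref{lem:maxchainupperbound} together with the discussion above to normalize $(Y, \xi)$: after recutting, each $T_i$ (for $i=1,2,3,4$) may be assumed convex with two dividing curves of slope $-1$, and each $\tilde{T}_0^j$ (for $j=1,2$) may be assumed convex with two dividing curves of slope $1$. This produces a decomposition
\[
Y = V_1 \cup V_2 \cup V_3 \cup V_4 \cup (\Sigma_1 \times S^1) \cup (\Sigma_2' \times S^1) \cup (T^2 \times [0,1]),
\]
where $\Sigma_1 \times S^1$ and $\Sigma_2' \times S^1$ have boundary slopes $(-1,-1,1)$, and $T^2 \times [0,1] \subset \Sigma_2 \times S^1$ is the toric annulus between $T_0^2$ and $\tilde{T}_0^2$.

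Next I would invoke Lemma \ref{lem:twisting}: since $\xi$ is minimally twisting, neither $\Sigma_1 \times S^1$ nor $\Sigma_2' \times S^1$ can contain a vertical Legendrian with twisting number $0$. Under this constraint, the contact structure on each pants piece is sharply constrained by the sign data on its boundary layers, so the pants contribute no independent multiplicative factor to the count beyond what is already encoded in the adjacent basic-slice layers.

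Then I would apply Honda's classification from \cite{H1} to each remaining piece. For each solid torus $V_i$, with boundary slope $-1$ and meridional data given by the continued fraction $[b_1^i,\ldots,b_{k_i}^i]$, the number of tight contact structures is at most $(b_1^i-1)(b_2^i-1)\cdots(b_{k_i}^i-1)$. For the toric annulus $T^2 \times [0,1]$, the Farey path between its boundary slopes is governed by $p/q = [a_1,\ldots,a_n]$, yielding at most $(a_1-1)(a_2-1)\cdots(a_n-1)$ tight contact structures. Multiplying the four leg-counts with the chain-count produces the claimed bound.

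The main obstacle I anticipate is ensuring that these piecewise bounds can honestly be multiplied: distinct tight contact structures on $Y$ must give rise to distinct tuples of tight contact structures on the building blocks, and boundary data across the gluing tori must match consistently. This should be handled by the shuffling principle within continued fraction blocks from \cite{H1}: once the sign data on each basic slice is fixed up to shuffling within its block, the global contact structure on $Y$ is determined, so the counts on disjoint pieces are independent and the multiplicative upper bound is honest.
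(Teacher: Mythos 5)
Your proposal is correct and follows essentially the same route as the paper: normalize the boundary slopes via Lemma \ref{lem:maxchainupperbound}, use Lemma \ref{lem:twisting} to exclude twisting-number-zero vertical Legendrians so that each pants piece carries a unique tight structure (the paper cites Honda's classification in \cite{H2} for this, which is the cleanest way to make your ``no independent multiplicative factor'' claim precise), and then apply Honda's counts to the solid tori and the toric annulus with $\frac{p-q}{p'-q'}=[a_1,\ldots,a_{n-1},a_n-1]$ before multiplying. The paper's proof is no more careful than yours about the multiplicativity of the piecewise bounds, so your closing remark is a reasonable extra precaution rather than a divergence.
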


\begin{proof}
By Lemma \ref{lem:twisting}, there are no vertical Legendrian curves of twisting number 0 in $\Sigma_i\times S^1$ for $i=1,2$. Hence by \cite{H2}, there are unique tight contact structures on $\Sigma_1\times S^1$ and $\Sigma_2'\times S^1$.
Moreover, since 
$$\frac{p-q}{p'-q'}=[a_1,\ldots,a_{n-1},a_n-1],$$
$T^2\times[0,1]$ admits exactly $(a_1-1)\cdots(a_n-1)$ tight contact structures. 
Since $s(\Gamma_{T_i})=-1$, we have that $s(\Gamma_{\partial V_i})=-\frac{x_i-y_i}{x_i'-y_i'}$. Again by Honda, we have that $V_i$ admits exactly $(b_1^i-1)\cdots(b_{k_i}^i-1)$ tight contact structures. 

Gluing the pieces together yields at most $(a_1-1)\cdots(a_n-1)\prod_{i=1}^4(b_1^i-1)\cdots(b^i_{k_i}-1)$ tight contact structures on $Y$.
\end{proof}

Let $T$ denote an incompressible torus in $Y$, which is contained in the maximal chain of $Y$. Suppose $(Y,\xi)$ is a tight contact structure with $\frac{m}{2}-$twisting along $T$, where $m>0$. It follows that $\Sigma_i\times S^1$ for some $i$ contains a vertical Legendrian with twisting number 0. Following the proof of Lemma \ref{lem:twisting}, and recutting $Y$, we may assume that $\Sigma_i\times S^1$ will have boundary slopes $\infty$.

\begin{prop} Let $m\in\ZZ_{\ge1}$. Then $Y$ admits at most $$2\prod_{i=1}^4(b_2^i-1)\cdots(b^i_{k_i}-1)$$
tight contact structures with $\frac{m}{2}$-twisting.
    \label{prop:twistingex}
\end{prop}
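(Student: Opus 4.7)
The plan is to follow the strategy of Proposition~\ref{prop:mintwistex}, modifying it to account for the prescribed $\frac{m}{2}$-twisting. By the paragraph immediately preceding the statement, after a suitable recutting we may assume that one of the pair-of-pants pieces, say $\Sigma_1 \times S^1$, has all three boundary tori of slope $\infty$. I will first invoke Honda's classification of tight contact structures on $\Sigma \times S^1$ \cite{H2} to conclude that this piece admits exactly $2$ tight contact structures, which are universally tight and distinguished by the sign of the relative Euler class; this produces the factor of $2$ in the desired bound.

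Once the sign of $\Sigma_1 \times S^1$ is fixed, I will argue that the outermost continued-fraction block of each solid torus $V_i$ has its sign forced, so that $V_i$ contributes $(b_2^i - 1)\cdots(b_{k_i}^i - 1)$ choices rather than the $(b_1^i - 1)(b_2^i - 1)\cdots(b_{k_i}^i - 1)$ choices of Proposition~\ref{prop:mintwistex}. For $V_1, V_2$, the recutting inserts a basic-slice-layer between $\Sigma_1 \times S^1$ and $V_i$, and by Proposition~\ref{prop:rigid}, any sign configuration producing a nonrigid pair would lead to overtwistedness once Giroux torsion is added. Consequently, the signs of the basic-slice-layers must be arranged so that only rigid pairs appear, and this pins down the sign of the outermost block of each $V_i$. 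An analogous argument applies on the $\Sigma_2$ side, where the sign is propagated through the torsion layer. Applying Honda's classification of tight contact structures on solid tori \cite{H1}, forcing the outermost block's sign removes exactly the factor $(b_1^i - 1)$, leaving $(b_2^i - 1)\cdots(b_{k_i}^i - 1)$ remaining choices per $V_i$.

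The chain region corresponding to $[a_1, \ldots, a_n]$ between the two pair-of-pants pieces now contains the prescribed $\frac{m}{2}$-Giroux torsion. By Honda's classification of tight toric annuli \cite{H1}, once the twisting amount and the signs at the two ends are fixed, this region is determined up to isotopy; in particular, the factor $(a_1 - 1)\cdots(a_n - 1)$ appearing in Proposition~\ref{prop:mintwistex} does not arise here, as the toric annulus has been subsumed into the rigid torsion layer. Multiplying all contributions yields the claimed upper bound $2\prod_{i=1}^4(b_2^i - 1)\cdots(b_{k_i}^i - 1)$.

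The main obstacle will be verifying precisely that the outermost continued-fraction block of each $V_i$ is genuinely sign-forced, with no free choice remaining inside the block. This requires combining Proposition~\ref{prop:rigid} with the within-block shuffling arguments recalled in Section~\ref{sec:prelim} to show that any alternative sign arrangement within this block either produces a nonrigid pair (hence overtwistedness under the Giroux torsion) or is isotopic to the forced arrangement via shuffling. A secondary difficulty is confirming that the resulting bound does not depend on which of $\Sigma_1 \times S^1$ or $\Sigma_2 \times S^1$ was chosen to have boundary slopes $\infty$, so that the count is genuinely symmetric in the four branches.
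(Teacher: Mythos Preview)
Your overall strategy matches the paper's: decompose $Y$ into the two pair-of-pants pieces, the torsion layer $T^2\times I$, and the solid tori $V_i$; use rigidity (Proposition~\ref{prop:rigid}) to force the signs of the outermost continued-fraction blocks; and count the remaining choices on each $V_i$. However, there is a genuine gap in where you locate the factor of $2$.

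You claim that $\Sigma_1\times S^1$ with all three boundary slopes $\infty$ and $\#\Gamma=2$ admits exactly two tight contact structures by Honda's classification \cite{H2}. This is not correct as stated: Honda identifies tight structures on $\Sigma\times S^1$ with isotopy classes of dividing sets on a horizontal convex pair of pants, and with two marked points on each boundary circle there are several admissible arc configurations (for instance a boundary-parallel arc on $\partial_i$ together with two arcs joining $\partial_j$ to $\partial_k$), not just the two ``each pair of boundaries joined by one arc'' configurations. So $\Sigma_1\times S^1$ alone does not supply the factor of $2$, and you cannot simply cite \cite{H2} for it.

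The paper places the $2$ elsewhere. After recutting so that $\Sigma_2\times S^1$ has all boundary slopes $\infty$, rigidity of the pair $T_3,T_4$ (Proposition~\ref{prop:rigid}, using that $V_3,V_4$ are solid tori) forces the first continued-fraction blocks of $V_3$ and $V_4$ to have the \emph{same} sign; the two choices of that common sign are the source of the factor $2$. This in turn fixes the sign of the outermost basic slice of $T^2\times I$, making the torsion layer unique. A second recutting transfers $T^2\times I$ to the $\Sigma_1$ side; rigidity of $T_1,T_2$ again forces the first blocks of $V_1,V_2$ to share a sign, but that sign is now already determined by what was propagated through $T^2\times I$, so no additional factor arises. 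Your phrase ``propagated through the torsion layer'' points at this, but the actual mechanism is the recutting step, which you should spell out. Once you replace the $\Sigma\times S^1$ count by this rigidity argument on the solid-torus side, your argument aligns with the paper's.
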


\begin{proof}
By construction, $s(\Gamma_{T_i})=-1$ for all $i$ and, up to recutting, we may assume that the $T^2\times I$ contains the $\frac{m}{2}-$twisting. Since $T^2\times I\subset\Sigma_2\times S^1$, the latter contains a vertical Legendrian. Hence we may recut $Y$ so that the boundary slopes of $T_0^2$, $T_3$, $T_4$ are infinite.
By Corollary \ref{prop:rigid}, the pair $T_3,T_4$ must be rigid. Since the first continued fraction blocks of $V_3$ and $V_4$ must have the same signs, we have two possible contact structures on those continued fraction blocks. The remainder of $V_i$ admits $(b_2^i-1)\cdots(b_{k_i}^i-1)$ tight contact structures for $i=3,4$. Since the outermost basic slice of $T^2\times I$ must have the opposite sign of the outermost basic slices of $V_1$ and $V_2$, there is a unique contact structure on $T^2\times I$. Hence the portion of $Y$ consisting of $\Sigma_2\times S^1$ and the solid tori $V_3$ and $V_4$ admits at most $2\prod_{i=3,4}(b_2^i-1)\cdots(b_{k_i}^i-1)$ tight contact structures. 

Now, recut $Y$ so that $T^2\times I$ is glued to $\Sigma_1\times I$. As above, $T_1,T_2$ must be rigid. Since the first continued fraction blocks of $V_1$ and $V_2$ must have the same signs, we have two possible contact structures on those continued fraction blocks.
However, the signs of the outermost basic slices of $V_3$ and $V_4$ determines the signs of the basic slices of $T^2\times I$, which in turn must determine the signs of the outermost basic slices of $V_1$ and $V_2$. Hence there is only one possible contact structure on the these continued fraction blocks. Now, as above, the remainder of $V_i$ admits $(b_2^i-1)\cdots(b_{k_i}^i-1)$ tight contact structures for $i=1,2$. Hence the portion of $Y$ consisting of $\Sigma_1\times S^1$, $T^2\times I$ and the solid tori $V_1$ and $V_2$ admits at most $\prod_{i=1,2}(b_2^i-1)\cdots(b_{k_i}^i-1)$ tight contact structures.
It now follows that $Y$ admits at most $2\prod_{i=1}^4(b_2^i-1)\cdots(b_{k_i}^i-1)$ right contact structures. \end{proof}

\section{Stein Diagrams: Proof of Theorem \ref{thm:mainminimallytwisting}}\label{sec:stein}
In this section we describe an algorithm to draw Stein diagrams for plumbed 3-manifolds whose associated graphs have no bad vertices and whose vertices are all at most trivalent.
It is a general fact from graph theory that such a graph is planar unless it is the complete bipartite graph $\mathcal{K}_{3,3}$. 

Over the next three subsections, we will consider the following three cases:
\begin{itemize}
    \item $\Gamma$ is planar and 2-connected;
    \item $\Gamma=\mathcal{K}_{3,3}$; and
    \item the general case.
\end{itemize}

\subsection{$\Gamma$ is planar and 2-connected}
Assume that $\Gamma$ is planar and 2-connected (i.e. $\Gamma$ cannot be made disjoint by removing a single vertex). Such graphs have cycles arranged in a cluster; see the top left of Figure \ref{fig:cluster} for an example (ignoring the red curve). We would like to draw $\Gamma$ so that it is in the following \textit{wrapped-up form}:
\begin{itemize}
    \item vertices are arranged in (horizontal) rows;
    \item the bottom row of vertices lie on a linear subgraph of $\Gamma$ and the endpoints of this subgraph are connected by an curved edge $\gamma$ below the bottom row, giving a cycle $c$ that does not enclose any portion of the graph;
    \item the first and last vertex of each row are each incident to curved edges that wrap around the edge $\gamma$; 
    \item all edges are either horizontal, vertical, or curved edges that wrap around the edge $\gamma$; and
    \item every cycle of $\Gamma$ encloses the innermost cycle $c$.
    \end{itemize}
See the bottom of Figure \ref{fig:cluster} for an example. 
We place our graph in wrapped-up form, because we can then easily apply the techniques in \cite{GoS} to draw a handlebody diagram of the 4-dimensional plumbing described by the graph. In particular, their will be: a 1-handle for every curved edge; a 2-handle attaching circle for every vertex; and the attaching circles of the 2-handles will link according to the edge structure of the graph (see \cite{GoS} for details).
We now aim to prove that that any planar 2-connected graph with vertices that are at most trivalent can be isotoped (in $S^2$) into wrapped-up form. 
We first need a graph-theoretic result.

\begin{lem} Let $\Gamma$ be a planar, 2-connected graph whose vertices are at most trivalent that is not a loop. Then
up to isotopy in $S^2$, the dual graph $\Gamma^*$ of $\Gamma$ contains a Hamiltonian path (a path traversing every vertex precisely once) ending at $v_\infty$, the vertex of $\Gamma^*$ corresponding to the unbounded region of $\mathbb{R}^2\setminus\Gamma$.
\label{lem:hamiltonian}
\end{lem}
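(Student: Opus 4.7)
My plan is to induct on $f$, the number of bounded faces of $\Gamma$ (equivalently, one less than the number of vertices of $\Gamma^*$). In the base case $f=1$, $\Gamma$ is a single cycle and $\Gamma^*$ consists of $v_\infty$ together with one inner-face vertex $w$ joined by parallel edges, so any single one of these edges gives the desired Hamiltonian path $w \to v_\infty$.

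For the inductive step, assume the statement for all such graphs with fewer than $f$ bounded faces, and let $\Gamma$ have $f \geq 2$. I invoke the standard ear decomposition of $2$-connected graphs: write $\Gamma = C_0 \cup P_1 \cup \cdots \cup P_k$, where $C_0$ is a cycle (chosen to bound $v_\infty$) and each $P_i$ is an ear---a simple path whose endpoints lie on $C_0 \cup P_1 \cup \cdots \cup P_{i-1}$ and whose interior vertices have degree $2$ in $\Gamma$. Let $\Gamma'$ denote $\Gamma$ with the interior of the last ear $P_k$ deleted; then $\Gamma'$ is $2$-connected, planar, has maximum degree at most $3$, and has $f-1$ bounded faces, so by the inductive hypothesis $\Gamma'^*$ admits a Hamiltonian path $\pi = (w_1, \ldots, w_m)$ ending at $w_m = v_\infty$.

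The ear $P_k$ is drawn in some face $F_0$ of $\Gamma'$, corresponding to a vertex $w_j$ of $\Gamma'^*$; restoring $P_k$ splits $F_0$ into two faces $F^+, F^-$ of $\Gamma$, so $w_j$ is replaced in $\Gamma^*$ by two vertices $F^+, F^-$ joined by one edge for each edge of $P_k$, while the remaining edges at $w_j$ are partitioned between $F^+$ and $F^-$ according to which side of $P_k$ the corresponding edge of $\Gamma'$ lies on. The plan is then to substitute for $w_j$ in $\pi$ the ordered pair $(F^+, F^-)$ or $(F^-, F^+)$, yielding a Hamiltonian path of $\Gamma^*$ ending at $v_\infty$; this substitution is immediate whenever the two path-neighbors of $w_j$ in $\pi$ (namely $w_{j-1}$ and $w_{j+1}$, when they exist) lie on opposite sides of $P_k$.

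The main obstacle is the `bad' case in which both path-neighbors of $w_j$ in $\pi$ lie on the same side of $P_k$, so that one of $F^+, F^-$ is cut off from the rest of $\pi$ by the substitution. To handle this, I plan to exploit the `up to isotopy in $S^2$' freedom in the statement: either perform a Whitney flip along a suitable $2$-vertex cut of $\Gamma$ to re-embed it in $S^2$ (which can reposition edges incident to $w_{j-1}$ or $w_{j+1}$ onto the opposite side of $P_k$), or else choose a different ear to play the role of `last', since the ear decomposition of a $2$-connected graph is highly non-unique. The hardest part of the proof will be verifying that at least one such re-embedding or re-ordering always succeeds, via a combinatorial case analysis on the local structure of $P_k$, the face $F_0$, and the positions of $w_{j-1}, w_{j+1}$ in $\Gamma'^*$; the max-degree-$3$ hypothesis and the resulting fact that $\Gamma'^*$ has only faces of size $2$ and $3$ should keep this analysis manageable.
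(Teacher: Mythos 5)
Your overall strategy (induct on the number of faces, delete an ear, extend a Hamiltonian path of the smaller dual) is the right one and is close in spirit to the paper's argument, but as written the proposal has a genuine gap exactly at the point you flag as ``the hardest part.'' In your setup the deleted ear $P_k$ is arbitrary, so restoring it splits an arbitrary face $F_0$ into $F^+$ and $F^-$, and you must rule out the bad case where both path-neighbors of $w_j$ lie on the same side of $P_k$. Neither escape route you propose is shown to work, and the first one cannot work in general: when $\Gamma$ is $3$-connected (e.g.\ $K_4$ or the cube graph, both at most trivalent and $2$-connected) there are no Whitney flips available --- the embedding in $S^2$ is unique up to reflection and choice of outer face --- so ``up to isotopy in $S^2$'' buys you nothing there. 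The second route (re-choosing which ear is ``last'') would require proving that some ear always avoids the bad case, which is essentially the whole content of the lemma and is left unproved. So the proposal is a plan with the key step missing, not a proof.

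The paper closes this gap with two choices that make the bad case disappear. First, it never splits a bounded face: it always deletes the ear $C$ separating a chosen bounded face $R$ (with dual vertex $v_z$ adjacent to $v_\infty$) from the \emph{unbounded} face, so that deleting $C$ merges $R$ into $R_\infty$; re-inserting $v_z$ is then an insertion next to the path's terminal vertex $v_\infty$, not a substitution in the middle of the path. Second, it strengthens the inductive hypothesis: the Hamiltonian path of the smaller dual can be arranged to end $(\ldots, v_j, v_\infty)$ where $v_j$ is any prescribed neighbor of $v_\infty$; choosing $v_j$ to be a neighbor of $v_z$ in $\Gamma_{k+1}^*$ lets one splice $v_z$ in between $v_j$ and $v_\infty$. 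If you adopt these two modifications --- restrict attention to ears on the outer boundary and carry the ``prescribed second-to-last vertex'' condition through the induction --- your argument goes through without any case analysis on face splittings.
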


\begin{proof}
We proceed by induction on the number of vertices $n$ of the dual graph $\Gamma^*$.
If $n=2$, then $\Gamma$ is simply a cycle and $\Gamma^*$ clearly contains a Hamiltonian path ending at $v_\infty$.  
Assume that every dual graph with $k$ vertices has a Hamiltonian path ending at $v_\infty$ such that the second-to-last vertex in the path can be chosen to be any vertex of $\Gamma_{k+1}^*$ adjacent to $v_\infty$.
Let $\Gamma_{k+1}$ be a graph (satisfying the hypotheses of the lemma) whose dual graph $\Gamma_{k+1}^*$ has $k+1\ge3$ vertices.
Let $v_z\in\Gamma_{k+1}^*$ be an arbitrary vertex adjacent to $v_\infty$.
Then $v_z$ corresponds to a bounded region $R$ of $\mathbb{R}^2\setminus \Gamma_{k+1}$ and $v_\infty$ corresponds to the unbounded region $R_\infty$ of $\mathbb{R}^2\setminus \Gamma_{k+1}$. Since $k+1\ge3$, it follows that $v_z$ is adjacent to another vertex $v_j\in\Gamma_{k+1}^*$. 
Let $C$ denote the linear subgraph of $\Gamma_{k+1}$ between two trivalent vertices that borders $R$ and $R_\infty$. Let $\Gamma_k$ denote the graph obtained from $\Gamma_{k+1}$ by removing all vertices and edges of $C$, except for the trivalent vertices at the beginning and end of $C$. Then $\Gamma_{k}^*$ is obtained from $\Gamma_{k+1}^*$ by removing $v_z$ along with all of the edges incident to $v_z$ and adding edges incident to $v_\infty$, one of which connects to $v_j$.
Since $\Gamma_k^*$ has length $k$, it
has a Hamiltonian path $a$ whose last two vertices are $v_j$ and $v_\infty$; denote this path by $(v_1,\ldots,v_{k-2},v_j,v_\infty)$. Since $v_j$ and $v_\infty$ are both adjacent to $v_z$ in $\Gamma_{k+1}^*$, the path $a'=(v_1,\ldots,v_{k-2},v_j,v_z,v_\infty)$ is a Hamiltonian path of $\Gamma_k^*$ ending at $v_\infty$. 
\end{proof}

\begin{lem}[Wrapping Algorithm]
Let $\Gamma$ be a 2-connected planar graph in which each vertex has valence at most 3. Then $\Gamma$ can be isotoped in $S^2$ into wrapped-up form.
\label{lem:wrappingalgorithm}
\end{lem}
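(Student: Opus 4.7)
The plan is to induct on the number of bounded faces of $\Gamma$, driven by the Hamiltonian path of $\Gamma^*$ produced by \lemref{lem:hamiltonian}. First I would apply \lemref{lem:hamiltonian} to obtain a Hamiltonian path $(v_1, v_2, \ldots, v_k, v_\infty)$ in $\Gamma^*$ ending at $v_\infty$, yielding an ordering $R_1, \ldots, R_k$ of the bounded regions of $\R^2 \setminus \Gamma$ such that consecutive regions share at least one edge. In the final wrapped-up drawing the innermost cycle $c$ will be $\partial R_1$, and $R_k$ will play the role of the ``outermost'' bounded face of $\Gamma$, lying just below $R_\infty$.

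For the base case $k = 1$, the graph $\Gamma$ is a single cycle (since $2$-connectedness forces $E = V$ when $F = 2$), and I would place it in wrapped-up form by laying all but one of its edges horizontally along a bottom row and drawing the last edge as the curved edge $\gamma$. For the inductive step, I would excise the face $R_k$ by removing the arc $\alpha = \partial R_k \cap \partial R_\infty$ and any vertices along $\alpha$ that become isolated or valence-$1$, producing a smaller graph $\Gamma'$ with one fewer bounded face. After verifying that $\Gamma'$ still satisfies the hypotheses of the lemma, I would apply the induction hypothesis to place $\Gamma'$ in wrapped-up form, and then reinsert $\alpha$ as a new topmost row of the drawing, with its remaining connections to the rest of the diagram realized as horizontal, vertical, or additional wrapping edges as appropriate.

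The main difficulty will be verifying that the intermediate graph $\Gamma'$ actually satisfies the hypotheses of the lemma---in particular that $2$-connectedness and the at-most-trivalent condition are preserved, or handling exceptional cases separately. A second, closely related, obstacle is showing that the endpoints of the excised arc $\alpha$ sit on the ``outside'' of the inductive wrapped-up drawing of $\Gamma'$ in a position compatible with the insertion of a new topmost row. The freedom afforded by \lemref{lem:hamiltonian} to choose the second-to-last vertex of the Hamiltonian path is key here: by selecting $v_k$ so that $R_k$ is maximally accessible from $R_\infty$, the endpoints of $\alpha$ naturally lie on the top or sides of the inductive drawing. Finally, a short case analysis based on whether $\alpha$ is a single arc or splits into several arcs (the latter occurring when $R_k$ and $R_\infty$ share disconnected boundary portions) reduces the reinsertion step to a handful of elementary local modifications of the drawing.
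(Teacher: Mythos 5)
Your proposal is viable and rests on the same engine as the paper's proof --- the Hamiltonian path in $\Gamma^*$ from \lemref{lem:hamiltonian} --- but it is organized differently. The paper's argument is a direct (non-inductive) construction: it takes the path $a$ dual to the Hamiltonian path, records the edges $e_0,\ldots,e_{n-1}$ that $a$ crosses, lets $\gamma_i$ be the maximal linear chain between trivalent vertices containing $e_i$, and then isotopes $\gamma_{n-1},\gamma_{n-2},\ldots,\gamma_1$ in turn through the point at infinity of $S^2$, unfolding $\Gamma$ into a nest of cycles around the innermost face before straightening $a$ into a vertical line. Your induction on the number of bounded faces peels off the same outermost chain, recurses, and reattaches it as a new top row; this is essentially the paper's isotopy sequence run in reverse, and it closely mirrors the induction the paper already performs inside the proof of \lemref{lem:hamiltonian}. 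The advantage of the paper's version is that nothing is ever deleted, so there is no intermediate graph whose hypotheses must be re-verified; the advantage of yours is that the inductive structure makes the ``one new row per face'' picture explicit. One concrete caution: you should not take $\alpha$ to be all of $\partial R_k\cap\partial R_\infty$, which can consist of several disjoint arcs whose simultaneous removal may disconnect $\Gamma$ (e.g.\ two triangles joined by two edges, where the middle face meets the outer face along two separate arcs). Instead take $\alpha$ to be the single maximal linear chain between two trivalent vertices that is dual to the last edge $v_kv_\infty$ of the Hamiltonian path --- this is exactly the chain $C$ removed in the paper's proof of \lemref{lem:hamiltonian}, and with that choice your remaining worry (that $\Gamma'$ stays $2$-connected and at most trivalent) is the same implicit assumption the paper itself makes there, not a new gap you are introducing.
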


\begin{proof}
By Lemma \ref{lem:hamiltonian}, the dual graph of $\Gamma$ contains a Hamiltonian path $c$ ending at $v_\infty$, the vertex corresponding to the unbounded region of $\mathbb{R}^2\setminus\Gamma$. Hence there exists a bounded region $R_0$ in $\mathbb{R}^2\setminus \Gamma$ and a path $a$ from $R_0$ to the unbounded region that passes through every region of $\mathbb{R}^2\setminus \Gamma$ precisely once; see the first diagram in Figure \ref{fig:cluster} for an example. As we traverse $a$, it passes through $n$ edges until it reaches the unbounded region; denote these edges by $e_0,\ldots, e_{n-1}$. For $1\le i\le n-1$, let $\gamma_i$ denote the linear subgraph of $\Gamma$ containing $e_i$ and ending in two trivalent vertices of $\Gamma$. Now isotope $\gamma_{n-1}$ in $S^2$ through the point at infinity to the other side of $\Gamma$ (see the second diagram in Figure \ref{fig:cluster}). 
Similarly isotope $\Gamma_{n-2},\ldots, \Gamma_1$ in order (see the third diagram in Figure \ref{fig:cluster}). The ending result can then be isotoped into wrapped-up form by isotoping the graph so that $a$ is a vertical line traveling upward and the vertices are arranged in rows (see the final diagram in Figure \ref{fig:cluster}).
\end{proof}

\begin{figure}
    \begin{tikzpicture}[dot/.style = {circle, fill, minimum size=1pt, inner sep=0pt, outer sep=0pt}]
\tikzstyle{smallnode}=[circle, inner sep=0mm, outer sep=0mm, minimum size=2mm, draw=black, fill=black];

\node[smallnode] (x) at (0,0) {};
\node[smallnode] (y) at (.866,-.5) {};
\node[smallnode] (z) at (0,-1.4) {};
\node[smallnode] (w) at (-.866,-.5) {};
\node[smallnode] (a) at (1.6,.4) {};
\node[smallnode] (b) at (1,1) {};
\node[smallnode] (c) at (0,1) {};
\node[smallnode] (d) at (-1,1) {};
\node[smallnode] (e) at (-1.6,.4) {};
\draw[-] (x) -- (y);
\draw[-] (y) -- (z);
\draw[-] (z) -- (w);
\draw[-] (w) -- (x);

\draw[-] (a) -- (b);
\draw[-] (b) -- (c);
\draw[-] (c) -- (d);
\draw[-] (d) -- (e);

\draw[-] (a) -- (y);
\draw[-] (c) -- (x);
\draw[-] (e) -- (w);

\node (h) at (0, -.5){};
\node (g) at (.5,.5){};
\node (i) at (-1.8,.75){};
\draw[color=red] (h.center) [out=45, in=0] to (g.center);
\draw[color=red] (g.center) [out=170, in=0] to (i.center);
\node[color=red] at (.8, .5){$a$};


\node (arrow1) at (2,0){};
\node (arrow2) at (3.6,0){};
\draw[->] (arrow1) -- (arrow2);

\node[smallnode] (x1) at (4.8,0) {};
\node[smallnode] (y1) at (5.666,-.5) {};
\node[smallnode] (z1) at (4.8,-1.4) {};
\node[smallnode] (w1) at (3.934,-.5) {};
\node[smallnode] (a1) at (6.4,.4) {};
\node[smallnode] (b1) at (5.8,1) {};
\node[smallnode] (c1) at (4.8,1) {};
\node[smallnode] (d1) at (7.3,1.3) {};
\node[smallnode] (e1) at (4.8,-2.5) {};
\draw[-] (x1) -- (y1);
\draw[-] (y1) -- (z1);
\draw[-] (z1) -- (w1);
\draw[-] (w1) -- (x1);

\draw[-] (a1) -- (b1);
\draw[-] (b1) -- (c1);
\draw[-] (c1) -- (d1);
\draw[-] (d1) -- (e1);

\draw[-] (a1) -- (y1);
\draw[-] (c1) -- (x1);
\draw[-] (e1) -- (w1);

\node (h1) at (5, -.5){};
\node (g1) at (5.3,.5){};
\node (i1) at (3.934,.75){};
\draw[color=red] (h1.center) [out=45, in=0] to (g1.center);
\draw[color=red] (g1.center) [out=170, in=0] to (i1.center);
\node[color=red] at (5.6, .5){$a$};


\node (arrow1) at (7.1,0){};
\node (arrow2) at (8.8,0){};
\draw[->] (arrow1) -- (arrow2);

\node[smallnode] (x2) at (10,0) {};
\node[smallnode] (y2) at (10.866,-.5) {};
\node[smallnode] (z2) at (10,-1.4) {};
\node[smallnode] (w2) at (9.134,-.5) {};
\node[smallnode] (a2) at (11.6,.4) {};
\node[smallnode] (b2) at (11,1) {};
\node[smallnode] (c2) at (10,1) {};
\node[smallnode] (d2) at (12.5,1.3) {};
\node[smallnode] (e2) at (10,-2.5) {};
\draw[-] (x2) -- (y2);
\draw[-] (y2) -- (z2);
\draw[-] (z2) -- (w2);
\draw[-] (w2) -- (x2);

\draw[-] (a2) -- (b2);
\draw[-] (b2) -- (c2);
\draw[-] (c2) -- (d2);
\draw[-] (d2) -- (e2);

\draw[-] (a2) -- (y2);
\draw[-] (e2) -- (w2);

\node (p) at (13,1.4) {};
\draw[-] (c2) [out=90, in=90] to (p.center);
\node (q) at (10,-3) {};
\draw[-] (p.center) [out=270, in=0] to (q.center);
\node (r) at (8.7,-.5) {};
\draw[-] (q.center) [out=180, in=270] to (r.center);
\draw[-] (r.center) [out=90, in=90] to (x2);

\node (h) at (10.1, -.5){};
\node (g) at (10.5,.5){};
\node (i) at (9,.75){};
\draw[color=red] (h.center) [out=45, in=0] to (g.center);
\draw[color=red] (g.center) [out=170, in=0] to (i.center);
\node[color=red] at (10.8, .5){$a$};


\node (arrow1) at (10,-3.2){};
\node (arrow2) at (8.2,-4){};
\draw[->] (arrow1) -- (arrow2);

\node[smallnode] (x3) at (3.5,-5) {};
\node[smallnode] (y3) at (4.5,-5) {};
\node[smallnode] (z3) at (5.5,-5) {};
\node[smallnode] (w3) at (6.5,-5) {};
\node[smallnode] (a3) at (5.5,-4) {};
\node[smallnode] (b3) at (6,-4) {};
\node[smallnode] (c3) at (6.5,-4) {};
\node[smallnode] (d3) at (7,-4) {};
\node[smallnode] (e3) at (3.5,-4) {};
\draw[-] (x3) -- (w3);
\draw[-] (a3) -- (d3);

\draw[-] (x3) -- (e3);
\draw[-] (z3) -- (a3);

\node (p0) at (3.5,-3) {};
\draw[-] (y3) [out=90, in=0] to (p0.center);
\node (p1) at (3.5,-6.5) {};
\draw[-] (p0.center) [out=180, in=180] to (p1.center);
\node (p2) at (6.5,-6.5) {};
\draw[-] (p1.center) [out=0, in=180] to (p2.center);
\node (p3) at (8,-4) {};
\draw[-] (p2.center) [out=0, in=270] to (p3.center);
\draw[-] (p3.center) [out=90, in=90] to (c3);

\node (q1) at (3.5,-6) {};
\draw[-] (e3) [out=180, in=180] to (q1.center);
\node (q2) at (6.5,-6) {};
\draw[-] (q1.center) [out=0, in=180] to (q2.center);
\draw[-] (q2.center) [out=0, in=0] to (d3.center);

\node (r1) at (3.5,-5.5) {};
\draw[-] (x3) [out=180, in=180] to (r1.center);
\node (r2) at (6.5,-5.5) {};
\draw[-] (r1.center) [out=0, in=180] to (r2.center);
\draw[-] (r2.center) [out=0, in=0] to (w3.center);

\node (h2) at (5, -5.2){};
\node (g2) at (5,-3.5){};
\draw[color=red] (h2.center) [out=90, in=270] to (g2.center);
\node[color=red] at (5.2, -3.5){$a$};

\end{tikzpicture}
\caption{Redrawing a cluster}\label{fig:cluster}
    \label{fig:dbc}
\end{figure}
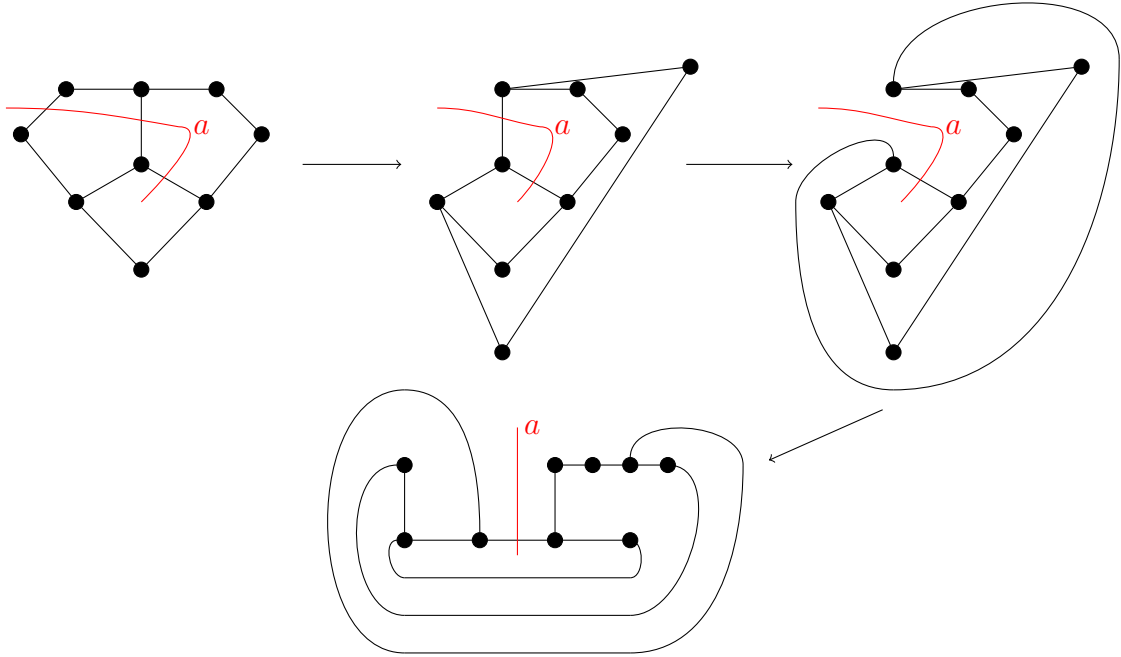

Once $\Gamma$ is in this wrapped-up form, we can begin drawing a handlebody diagram of $P$ whose 1-handles are in a standard position as in \cite{GoS}; recall that there is a 1-handle for every curved edge. Moreover, note that each 2-handle can run through at most two 1-handles, since the vertices in the original graph are all at most trivalent. 
We will describe how to draw this diagram in two steps, which will be useful when arranging the handlebody diagram into a Stein diagram. 

\underline{Smooth Step 1:} Draw a (clockwise oriented) unknot for every vertex in each row and link any two unknots in the same row corresponding to adjacent vertices; the linking should be positive if the corresponding edge is positive and negative if the corresponding edge is negative (see Figures \ref{fig:poslink} and \ref{fig:neglink}). If a vertex is at the end of a row and is incident to a unique curved edge, then pass the corresponding unknot through the corresponding 1-handle and link it with the other vertex incident to the curved edge (see Figure \ref{fig:linkthru1handle}). If a vertex is at the end of a row and is incident to two curved edges, then pass the corresponding unknot through both of the corresponding 1-handles and link it with the other vertices incident to the curved edges (see Figure \ref{fig:linkthrutwo1handles}). For any curved edge not incident to the first or last vertices of a row, draw a ``half" unknot protruding from the 1-handle (see Figure \ref{fig:protrude}).
For a full example of Step 1, see Figure \ref{fig:handlebody1}. 

\begin{figure}
\centering
\begin{subfigure}{.45\textwidth}
\captionsetup{width=.7\textwidth}
         \centering
         \includegraphics[scale=.6]{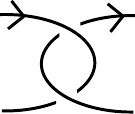}
         \caption{Linking corresponding to a positive edge}
         \label{fig:poslink}
\end{subfigure}
\begin{subfigure}{.45\textwidth}
\captionsetup{width=.7\textwidth}
         \centering
         \includegraphics[scale=.6]{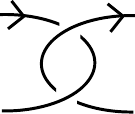}
         \caption{Linking corresponding to a negative edge}
         \label{fig:neglink}
\end{subfigure}
\begin{subfigure}{.45\textwidth}
\captionsetup{width=.7\textwidth}
         \centering
         \vspace{1cm}
         \includegraphics[scale=.45]{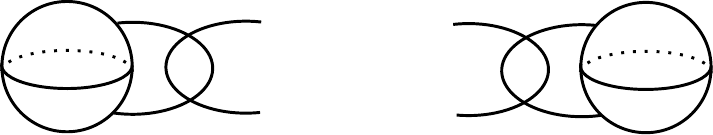}
         \caption{Passing unknot through a 1-handle (note that the linking information is left out)}
         \label{fig:linkthru1handle}
\end{subfigure}
\begin{subfigure}{.45\textwidth}
\captionsetup{width=.7\textwidth}
         \centering
         \vspace{1cm}
         \includegraphics[scale=.45]{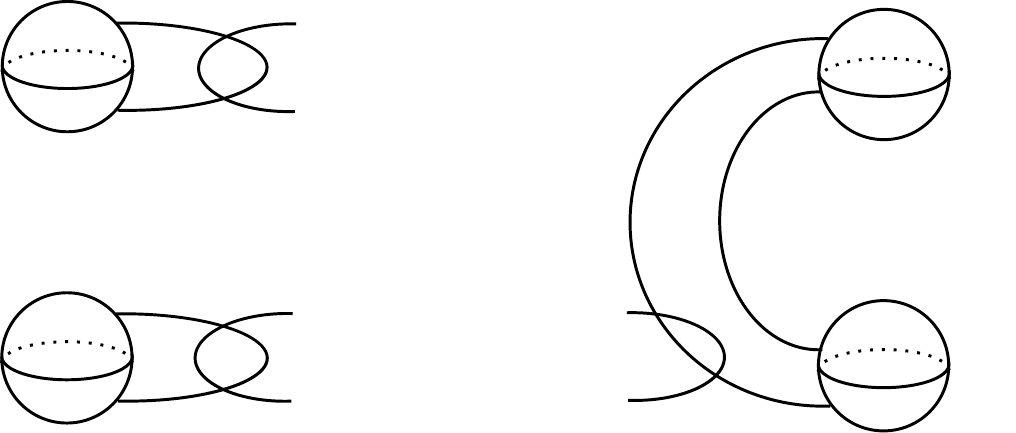}
         \caption{Passing unknot through two 1-handles (note that the linking information is left out)}
         \label{fig:linkthrutwo1handles}
\end{subfigure}
\begin{subfigure}{.45\textwidth}
\captionsetup{width=.7\textwidth}
         \centering
         \vspace{1cm}
         \includegraphics[scale=.45]{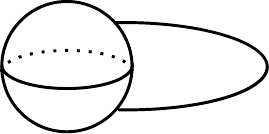}
         \caption{A ``half unknot" protruding through a 1-handle}
         \label{fig:protrude}
\end{subfigure}
\begin{subfigure}{.45\textwidth}
\captionsetup{width=.7\textwidth}
         \centering
         \vspace{1cm}
         \includegraphics[scale=.45]{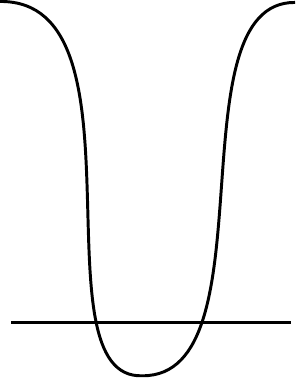}
         \caption{Linking two unknots according to a vertical edge}
         \label{fig:verticaledge}
\end{subfigure}
\begin{subfigure}{.45\textwidth}
\captionsetup{width=.7\textwidth}
         \centering
         \vspace{1cm}
         \includegraphics[scale=.45]{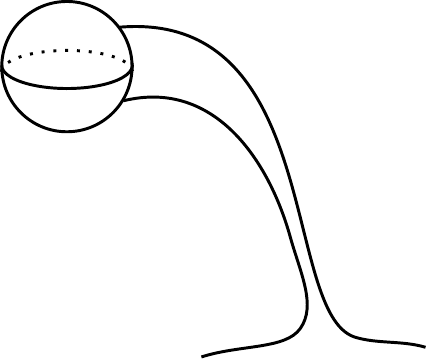}
         \caption{Adding a bank according to a vertical curved edge}
         \label{fig:verticalcurvededge}
\end{subfigure}
\caption{Drawing 2-handles locally}\label{fig:local}
\end{figure}

\underline{Smooth Step 2:} For each vertical edge between two vertices in the plumbing graph, add a band to the top unknot $K_1$ and link this band with the lower unknot $K_2$ (see Figure \ref{fig:verticaledge}). For each curved edge incident to a vertex not at the beginning or end of a row, similarly add a band between the relevant unknot protruding from a 1-handle and the unknot below corresponding to the vertex (see \ref{fig:verticalcurvededge}). See Figure \ref{fig:handlebody2} for a full example.\\

\begin{figure}
\centering
\begin{subfigure}{\textwidth}
\centering
\captionsetup{width=0.7\textwidth}
    \begin{tikzpicture}[dot/.style = {circle, fill, minimum size=1pt, inner sep=0pt, outer sep=0pt}]
    \tikzstyle{smallnode}=[circle, inner sep=0mm, outer sep=0mm, minimum size=2mm, draw=black, fill=black];
    \node[smallnode] (x3) at (3.5,-5) {};
    \node[smallnode] (y3) at (4.5,-5) {};
    \node[smallnode] (z3) at (5.5,-5) {};
    \node[smallnode] (w3) at (6.5,-5) {};
    \node[smallnode] (a3) at (5.5,-4) {};
    \node[smallnode] (b3) at (6,-4) {};
    \node[smallnode] (c3) at (6.5,-4) {};
    \node[smallnode] (d3) at (7,-4) {};
    \node[smallnode] (e3) at (3.5,-4) {};
    \draw[-] (x3) -- (w3);
    \draw[-] (a3) -- (d3);

    \draw[-] (x3) -- (e3);
    \draw[-] (z3) -- (a3);

    \node (p0) at (3.5,-3) {};
    \draw[-] (y3) [out=90, in=0] to (p0.center);
    \node (p1) at (3.5,-6.5) {};
    \draw[-] (p0.center) [out=180, in=180] to (p1.center);
    \node (p2) at (6.5,-6.5) {};
    \draw[-] (p1.center) [out=0, in=180] to (p2.center);
    \node (p3) at (8,-4) {};
    \draw[-] (p2.center) [out=0, in=270] to (p3.center);
    \draw[-] (p3.center) [out=90, in=90] to (c3);

    \node (q1) at (3.5,-6) {};
    \draw[-] (e3) [out=180, in=180] to (q1.center);
    \node (q2) at (6.5,-6) {};
    \draw[-] (q1.center) [out=0, in=180] to (q2.center);
    \draw[-] (q2.center) [out=0, in=0] to (d3.center);

    \node (r1) at (3.5,-5.5) {};
    \draw[-] (x3) [out=180, in=180] to (r1.center);
    \node (r2) at (6.5,-5.5) {};
    \draw[-] (r1.center) [out=0, in=180] to (r2.center);
    \draw[-] (r2.center) [out=0, in=0] to (w3.center);

    \node at (5, -4.8){$-$};
    \node at (7, -3.4){$-$};
    \node at (5.75, -3.8){$-$};
    \node at (5.75, -4.5){$-$};
    \end{tikzpicture}
    \caption{The original plumbing in wrapped-up form}
    \label{fig:originalP}
\end{subfigure}
\begin{subfigure}{\textwidth}
\captionsetup{width=0.7\textwidth}
    \vspace{1cm}
         \centering
         \includegraphics[scale=.5]{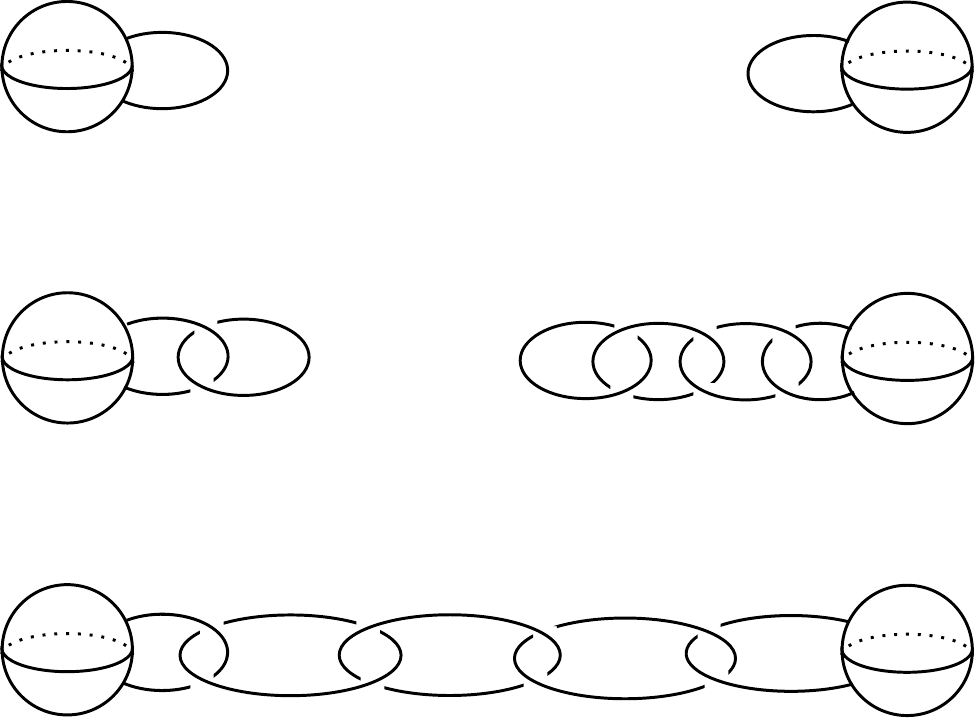}
         \caption{Step 1: draw a horizontal unknot in each vertex position}
         \label{fig:handlebody1}
\end{subfigure}
\begin{subfigure}{\textwidth}
\captionsetup{width=0.7\textwidth}
         \centering
         \vspace{1cm}
         \includegraphics[scale=.5]{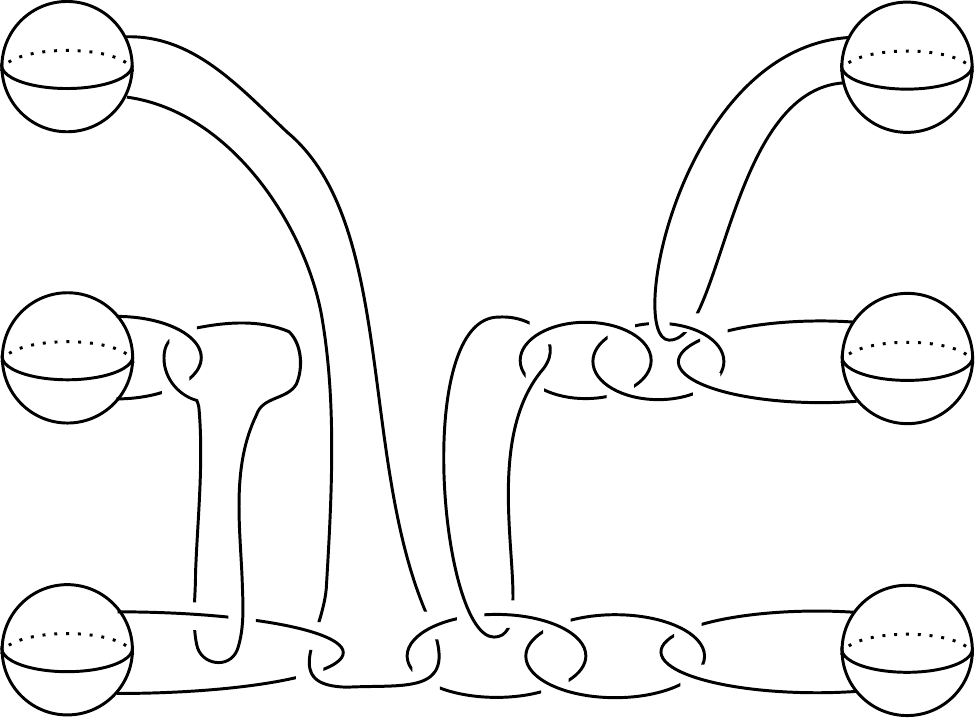}
         \caption{Step 2: For each vertical linking, add a band}
         \label{fig:handlebody2}
\end{subfigure}
\caption{Drawing a handlebody diagram corresponding to the clustered plumbing in Figure \ref{fig:cluster}}
\end{figure}

We now show that we can draw each unknot in this handlebody diagram as a Legendrian unknot with $tb=-1$. We do this again in two steps.

\underline{Legendrian Step 1}: Start with the diagram in Smooth Step 1 above and draw every unknot not passing through two 1-handles as the standard Legendrian unknot with $tb=-1$ and link two adjacent unknots as in either Figure \ref{fig:pos} or \ref{fig:neg}. 
For each unknot passing through two 1-handles, we can slide the unknot over one of the 1-handles and arrange the diagram as in Figure \ref{fig:1-handle2} and link the unknot to any adjacent unknot in the same row according to either Figure \ref{fig:pos} or \ref{fig:neg}. 
See Figure \ref{fig:stein0} for the continued example.

\underline{Legendrian Step 2}: Suppose $K_1$ and $K_2$ are unknots corresponding to adjacent vertices such that $K_1$ is higher in the diagram than $K_2$. If the edge between the vertices is negative, then arrange the linking as in Figure \ref{fig:neg2}; if the edge is positive, then after sliding $K_1$ over every 1-handle below $K_1$ in the diagram, we can arrange the linking as in Figure \ref{fig:pos2} (or its reflection).
Next suppose an unknot $K$ runs through a single 1-handle and then travels vertically downward in the diagram (i.e. the vertex corresponding to $K$ is not at the beginning or end of its row and it is incident to a curved edge). In Smooth Step 1, we added a band between the unknot protruding from the 1-handle and $K$; here, we can slide the band over the 1-handle, arranging $K$ near the 1-handle as in Figure \ref{fig:1handle} (or its reflection). \\

\begin{figure}
\centering
\begin{subfigure}{.45\textwidth}
\captionsetup{width=0.7\textwidth}
         \centering
         \includegraphics[scale=.5]{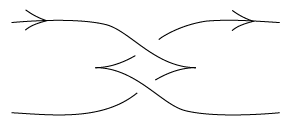}
         \caption{Horizontal linking corresponding to positive edges}
         \label{fig:pos}
\end{subfigure}
\begin{subfigure}{0.45\textwidth}
\captionsetup{width=0.7\textwidth}
         \centering
         \includegraphics[scale=.5]{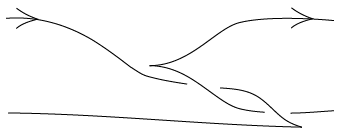}
         \caption{Horizontal linking corresponding to negative edges}
         \label{fig:neg}
\end{subfigure}
\begin{subfigure}{.45\textwidth}
\captionsetup{width=0.7\textwidth}
\vspace{.5cm}
         \centering
         \includegraphics[scale=.5]{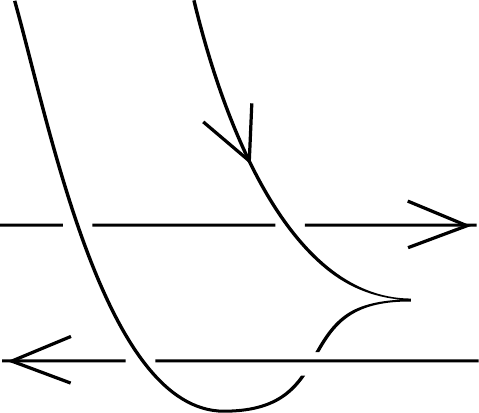}
         \caption{Vertical linking corresponding to positive edges}
         \label{fig:pos2}
\end{subfigure}
\begin{subfigure}{0.45\textwidth}
\captionsetup{width=0.7\textwidth}
\vspace{.5cm}
         \centering
         \includegraphics[scale=.5]{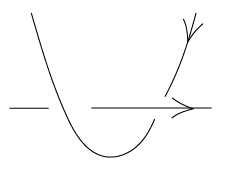}
         \caption{Vertical linking corresponding to negative edges}
         \label{fig:neg2}
\end{subfigure}
\begin{subfigure}{.45\textwidth}
\captionsetup{width=0.7\textwidth}
\vspace{.5cm}
         \centering
         \includegraphics[scale=.5]{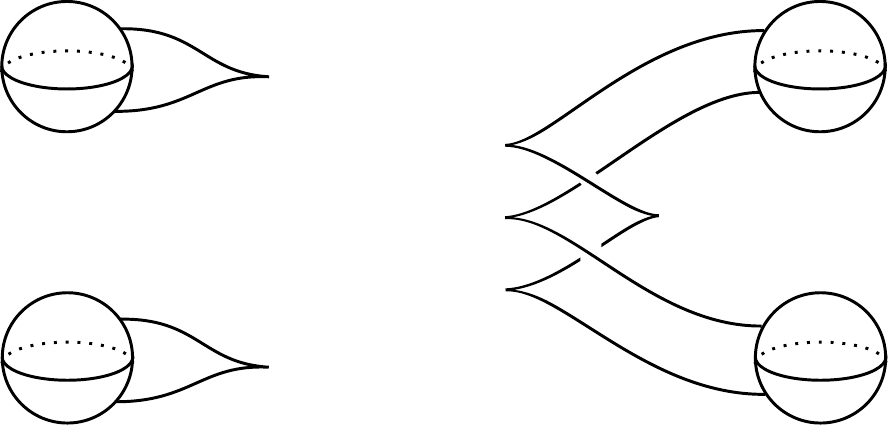}
         \caption{A Legendrian unknot $K$ running through two 1-handles}
         \label{fig:1-handle2}
\end{subfigure}
\begin{subfigure}{.45\textwidth}
\captionsetup{width=0.7\textwidth}
\vspace{.5cm}
         \centering
         \includegraphics[scale=.5]{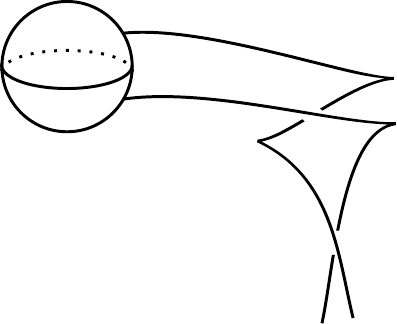}
         \caption{A Legendrian unknot $K$ running a 1-handle}
         \label{fig:1handle}
\end{subfigure}
\label{fig:linking}
\caption{Linking of Legendrian unknots}
\end{figure}

\begin{figure}
\begin{subfigure}{\textwidth}
\captionsetup{width=0.7\textwidth}
         \centering
         \includegraphics[scale=.5]{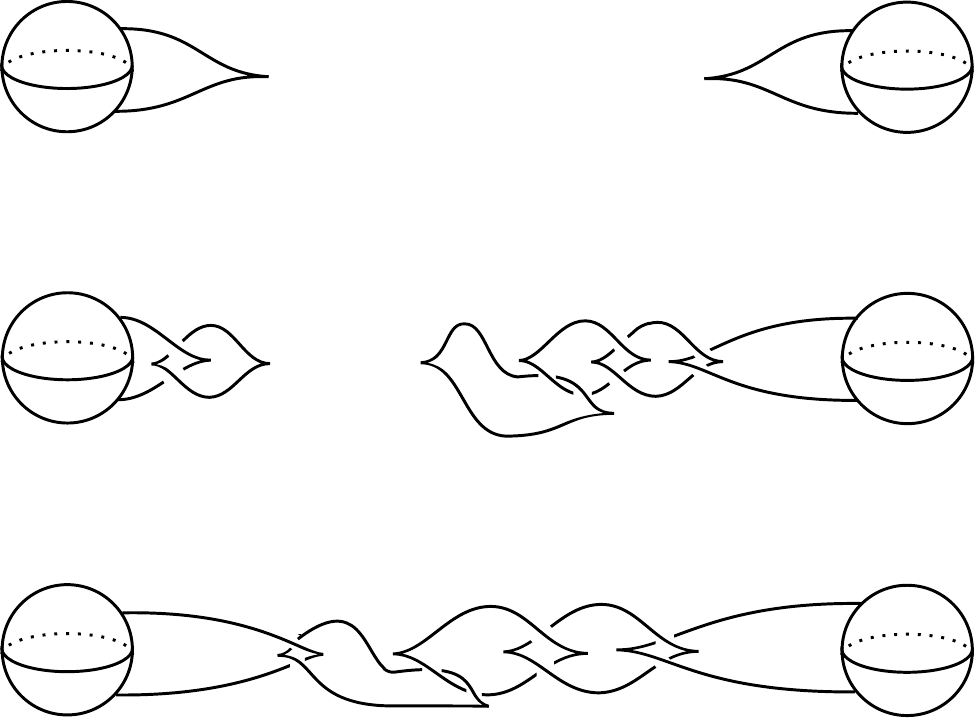}
         \caption{Step 1 of making unknots Legendrian}
         \label{fig:stein0}
\end{subfigure}
\begin{subfigure}{\textwidth}
\captionsetup{width=0.7\textwidth}
\vspace{1cm}
         \centering
         \includegraphics[scale=.5]{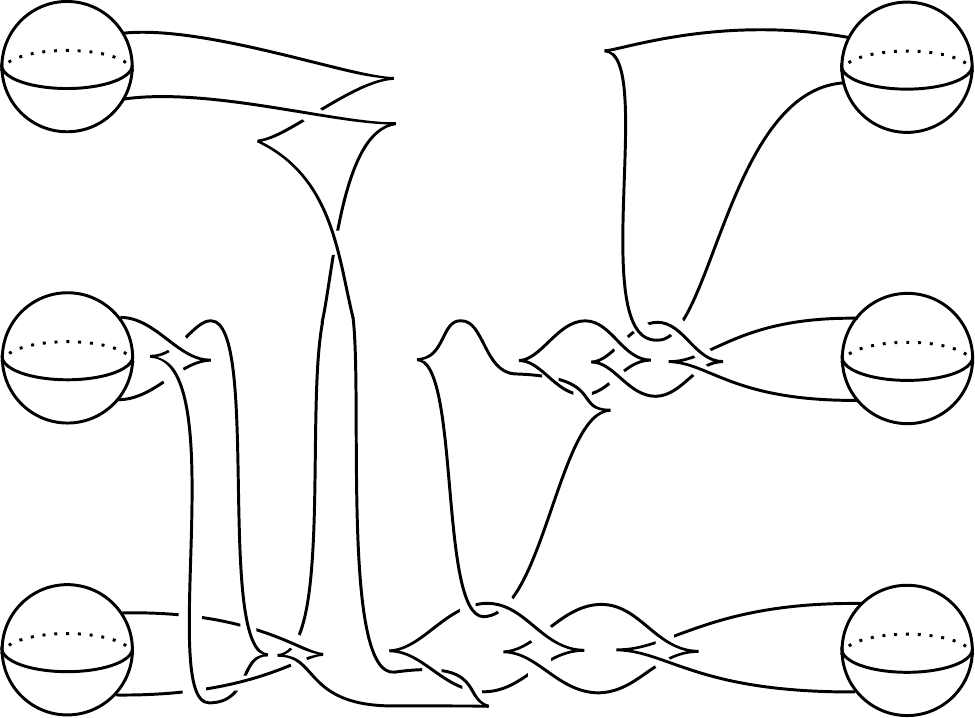}
         \caption{Step 2 of making unknots Legendrian}
         \label{fig:stein1}
\end{subfigure}
\caption{Legendrian handlebody diagram for $P$ corresponding to the graph given in Figure \ref{fig:cluster}. Each unknot has $tb=-1$.}\label{fig:stein}
\end{figure}

The following result follows from the above algorithm.

\begin{prop} If $\Gamma$ is a 2-connected planar graph that are at most trivalent, then the associated plumbed 3-manifold has Legendrian handlebody diagram whose 2-handles are Legendrian unknots with $tb=-1$
    \label{prop:cluster}
\end{prop}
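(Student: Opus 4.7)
The plan is to simply verify that the algorithm described in the preceding discussion produces the desired Legendrian handlebody diagram. Since every step in the algorithm has been outlined, the proof amounts to checking that (i) the algorithm can be carried out for any $\Gamma$ satisfying the hypotheses, and (ii) every 2-handle attaching circle in the resulting diagram is isotopic to a Legendrian unknot with $tb=-1$.

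First, I would invoke Lemma~\ref{lem:wrappingalgorithm} to isotope $\Gamma$ into wrapped-up form, noting that the hypotheses of Proposition~\ref{prop:cluster} are exactly those of that lemma. Then I would perform Smooth Step 1 and Smooth Step 2 to produce a smooth handlebody diagram for the associated 4-dimensional plumbing $P$; that this diagram indeed represents $P$ follows from the fact that the linking of adjacent unknots (with appropriate signs) and the passage of unknots through 1-handles realize precisely the plumbing operations prescribed by the edges of $\Gamma$, as in \cite{GoS}. The hypothesis that every vertex is at most trivalent guarantees that no 2-handle needs to run through more than two 1-handles and no unknot requires more than two bands; this is what makes the local pictures of Figures~\ref{fig:linkthru1handle}--\ref{fig:protrude} sufficient.

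Next I would apply Legendrian Step 1 to replace each smooth unknot with a standard Legendrian unknot with $tb=-1$, arranging horizontal linkings as in Figures~\ref{fig:pos} and~\ref{fig:neg}. For unknots that pass through a 1-handle (or two 1-handles), a handleslide brings them into the standard local form of Figure~\ref{fig:1handle} (resp.\ Figure~\ref{fig:1-handle2}), again preserving $tb=-1$. Finally, Legendrian Step 2 handles the vertical edges: for a positive vertical linking, after sliding the upper unknot over any 1-handles that lie between it and its partner, the linking can be put into the form of Figure~\ref{fig:pos2}; for a negative vertical linking, the form of Figure~\ref{fig:neg2} is used directly. In both cases the band attachments coming from Smooth Step 2 fit into the standard Legendrian picture without changing the Thurston--Bennequin invariant.

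The main point to verify (and what I view as the only subtle obstacle) is that the various handleslides needed to bring an unknot into the standard position next to a 1-handle, and the subsequent band attachments, can all be performed without forcing cusps or crossings that would lower $tb$ below $-1$ on some component. This is where the at-most-trivalent hypothesis is essential: each Legendrian unknot participates in at most three linkings (horizontal, vertical, or through a 1-handle), and the standard local models in Figures~\ref{fig:pos}--\ref{fig:1handle} have been chosen precisely so that these three types of interactions can coexist around a single $tb=-1$ unknot. Once this local compatibility is confirmed component by component, the proposition follows.
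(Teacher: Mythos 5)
Your proposal is correct and matches the paper's approach exactly: the paper's own proof of Proposition~\ref{prop:cluster} is the single sentence that the result ``follows from the above algorithm,'' i.e.\ precisely the combination of Lemma~\ref{lem:wrappingalgorithm}, the two Smooth Steps, and the two Legendrian Steps that you spell out. Your version is in fact more explicit than the paper's, particularly in flagging where the at-most-trivalent hypothesis is used to guarantee the local models suffice.
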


\subsection{$\Gamma$ is nonplanar}
We now consider the graph $\mathcal{K}_{3,3}$, shown in Figure \ref{fig:k33a}, which is the only nonplanar graph having vertices that are at most trivalent.

\begin{prop}
If $\Gamma=\mathcal{K}_{3,3}$, then the associated plumbed 3-manifold has Legendrian handlebody diagram whose 2-handles are Legendrian unknots with $tb=-1$.
\label{prop:k33}
\end{prop}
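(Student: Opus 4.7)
The plan is to extend the wrapping algorithm of Subsection 5.1 to $\mathcal{K}_{3,3}$ by exploiting a Hamiltonian cycle and realizing the three remaining edges via additional 1-handles. Write the bipartition of $\mathcal{K}_{3,3}$ as $\{u_1, u_2, u_3\} \sqcup \{v_1, v_2, v_3\}$ with edges $u_i v_j$ for all $i,j$, and note that $u_1 v_1 u_2 v_2 u_3 v_3 u_1$ is a Hamiltonian cycle. First, arrange the six vertices in a horizontal row in this cyclic order, draw the five consecutive cycle edges horizontally, and draw the sixth cycle edge $v_3 u_1$ as a curved arc $\gamma$ below the row. This places the Hamiltonian cycle into a configuration directly analogous to the innermost cycle of the planar wrapped-up form.

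Second, realize the three remaining edges $u_1 v_2$, $v_1 u_3$, $u_2 v_3$ (the ``long diagonals'' of the hexagon) as curved arcs above the row, each passing through its own 1-handle positioned at a distinct height. Because each diagonal is incident to two internal vertices of the row (except where it meets the row endpoints $u_1$ or $v_3$), I would realize each such incidence using the ``protruding half unknot'' construction of Figure~\ref{fig:protrude}, while the endpoint vertices $u_1$ and $v_3$ each pass through two 1-handles (the $\gamma$-handle and the diagonal handle incident to them) as in Figure~\ref{fig:linkthrutwo1handles}. Placing the three top 1-handles at distinct heights resolves the apparent planar crossings between the diagonals. The resulting diagram uses four 1-handles (one for $\gamma$, three for the diagonals), matching $b_1(\mathcal{K}_{3,3}) = 9 - 6 + 1 = 4$, and by the Gompf--Stipsicz construction \cite{GoS} is a valid handlebody diagram for the associated 4-dimensional plumbing.

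Third, apply Smooth Steps~1 and~2 of Subsection~5.1 verbatim: draw an unknot at each vertex position, link horizontally adjacent unknots according to the signs of the connecting edges, route the two endpoint unknots through the appropriate 1-handles, and for each internal-vertex incidence with a diagonal, add a band between the corresponding protruding half unknot and the vertex unknot as in Figure~\ref{fig:verticalcurvededge}. Finally, apply Legendrian Steps~1 and~2 to realize every 2-handle as a Legendrian unknot with $tb=-1$, performing handleslides over 1-handles as needed to arrange vertical bands and endpoint passings as in Figures~\ref{fig:1-handle2} and~\ref{fig:1handle}.

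The main obstacle will be justifying that the non-planarity of $\mathcal{K}_{3,3}$ does not obstruct the Legendrian realization step. This turns out to be mild: because each diagonal is carried by its own dedicated 1-handle, the three diagonals interact with one another and with the cycle edges in the handlebody diagram only via the local linking and banding rules depicted in Figures~\ref{fig:pos}--\ref{fig:1handle}, which apply equally well to the ``diagonal'' curved edges as to the curved edges handled in the planar algorithm. Consequently the same sequence of Legendrian moves used in the proof of Proposition~\ref{prop:cluster} produces the desired diagram, in which every 2-handle is a Legendrian unknot with $tb=-1$.
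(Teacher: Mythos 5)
Your construction is sound but follows a genuinely different route from the paper's. The paper keeps all four 1-handles wrapping around the bottom of the diagram (as in the planar wrapped-up form), which forces a single unavoidable crossing between two curved edges; it then argues this crossing is harmless because the two resolutions of the crossing give isotopic diagrams (one 2-handle can be slid over a 1-handle to pass from one side of the other 2-handle to the other). You instead take the Hamiltonian cycle of $\mathcal{K}_{3,3}$ as the innermost cycle and dedicate a separate 1-handle to each of the three chords, with feet placed locally above the incident vertices, so that in your projection no two attaching circles are forced to cross at all: the nonplanarity is entirely absorbed into the (undrawn) tubes of the 1-handles. Both constructions use $b_1(\mathcal{K}_{3,3})=4$ one-handles, and both then run the planar Legendrian algorithm, so your version buys a crossing-free smooth diagram at the cost of 1-handles in nonstandard position, while the paper's buys standard-position 1-handles at the cost of one crossing that must be justified.

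The one place where your argument is thinner than it should be is the final Legendrian step. To invoke Lisca--Mati\'c (as the paper does in Section \ref{sec:stein}) you need the diagram in Gompf's standard form, with each 1-handle an aligned ball pair and the attaching circles forming a Legendrian front in the complement. Dragging your three chord 1-handles from their scattered positions above the row into standard position reintroduces crossings among the chord strands --- these are exactly the crossings your ``distinct heights'' remark claims to have eliminated. They are in fact harmless, because they arise from an ambient isotopy of your crossing-free diagram (so the over/under data is determined and consistent, unlike in the paper's situation where no crossing-free planar projection exists and the resolution must be checked by a handle slide), but you should say this explicitly rather than assert that non-planarity creates no issue for the Legendrian realization. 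With that point addressed, your proof is complete and arguably more systematic than the paper's ad hoc redrawing of $\mathcal{K}_{3,3}$.
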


\begin{proof}
Consider the alternate wrapped-up diagram $\mathcal{K}_{3,3}$ in Figure \ref{fig:k33b}; note that we are leaving out the signs of the edges for convenience. This diagram can be seen to be $\mathcal{K}_{3,3}$ by partitioning the vertices into two sets in which the first set contains the first, third, and fifth vertices (counting from the top left vertex and moving down and to the right). 
Note that since $\mathcal{K}_{3,3}$ is not planar, the pair of intersecting edges cannot be removed.
However, since our diagram is wrapped up as in the planar case, we can follow the algorithm above to draw a Legendrian handlebody diagram in which each unknot has $tb=-1$. First, the smooth handlebody diagram is given in Figure \ref{fig:k33smooth}; the linking information is left out since this depends on the signs of the edges of $\mathcal{K}_{3,3}$. Notice that the top left 2-handle passes over a 2-handle below it (which corresponds to the intersecting edges of $\mathcal{K}_{3,3}$). This diagram is isotopic to the diagram in which the top left 2-handle passes below the other 2-handle; this can been seen by simply sliding the top left 2-handle over the second (from the top) 1-handle. Now if the signs of the edges of $\mathcal{K}_{3,3}$ are specified, then we can apply the algorithm used in the planar case to obtain a Legendrian diagram in which each unknot has $tb=-1$. See Figure \ref{fig:k33stein} for an example of one such diagram (in which the associated graph has exactly two negative edges, which are the outermost and second outermost edges of the wrapped-up graph).
\end{proof}

\begin{figure}
    \centering
\begin{subfigure}{.45\textwidth}
    \captionsetup{width=0.7\textwidth}
    \centering
    \begin{tikzpicture}[dot/.style = {circle, fill, minimum size=1pt, inner sep=0pt, outer sep=0pt}]
\tikzstyle{smallnode}=[circle, inner sep=0mm, outer sep=0mm, minimum size=2mm, draw=black, fill=black];
\node[smallnode] (a) at (0,0) {};
\node[smallnode] (b) at (1.5,0) {};
\node[smallnode] (c) at (3,0) {};

\node[smallnode] (d) at (0,-1.5) {};
\node[smallnode] (e) at (1.5,-1.5) {};
\node[smallnode] (f) at (3, -1.5) {};

\draw[-] (a) -- (d);
\draw[-] (a) -- (e);
\draw[-] (a) -- (f);
\draw[-] (b) -- (d);
\draw[-] (b) -- (e);
\draw[-] (b) -- (f);
\draw[-] (c) -- (d);
\draw[-] (c) -- (e);
\draw[-] (c) -- (f);

\end{tikzpicture}
\caption{The standard diagram}\label{fig:k33a}
\end{subfigure}
   \begin{subfigure}{.45\textwidth}
    \captionsetup{width=0.7\textwidth}
    \centering
    \begin{tikzpicture}[dot/.style = {circle, fill, minimum size=1pt, inner sep=0pt, outer sep=0pt}]
\tikzstyle{smallnode}=[circle, inner sep=0mm, outer sep=0mm, minimum size=2mm, draw=black, fill=black];
\node[smallnode] (d) at (0,-1) {};
\node[smallnode] (a) at (1,-1) {};
\node[smallnode] (e) at (2,-1) {};
\node[smallnode] (c) at (3,-1) {};
\node[smallnode] (b) at (0,0) {};
\node[smallnode] (f) at (3, 0) {};

\node (p1) at (-.25,.5){};
\node (p2) at (-.25,-2.5){};
\node (p3) at (1.5,-2.5){};
\node (p4) at (3.25,-2.5){};
\node (p5) at (3.25,.5){};

\node (q1) at (-.5,1){};
\node (q2) at (-.5,-3){};
\node (q3) at (1.5,-3){};
\node (q4) at (3.5,-3){};
\node (q5) at (3.5,1){};

\node (r1) at (0,-2){};
\node (r2) at (3,-2){};

\node (s1) at (0,-1.5){};
\node (s2) at (3,-1.5){};

\draw[-] (a) -- (d);
\draw[-] (a) -- (e);
\draw[-] (c) -- (e);
\draw[-] (c) -- (f);
\draw[-] (b) -- (d);

\draw[-] (a) [out=90, in=0] to (p1.center);
\draw[-] (p1.center) [out=180, in=180] to (p2.center);
\draw[-] (p2.center) [out=0, in=180] to (p3.center);
\draw[-] (p3.center) [out=0, in=180] to (p4.center);
\draw[-] (p4.center) [out=0, in=0] to (p5.center);
\draw[-] (p5.center) [out=180, in=90] to (f);

\draw[-] (b) [out=90, in=0] to (q1.center);
\draw[-] (q1.center) [out=180, in=180] to (q2.center);
\draw[-] (q2.center) [out=0, in=180] to (q3.center);
\draw[-] (q3.center) [out=0, in=180] to (q4.center);
\draw[-] (q4.center) [out=0, in=0] to (q5.center);
\draw[-] (q5.center) [out=180, in=90] to (e);

\draw[-] (b) [out=180, in=180] to (r1.center);
\draw[-] (r1.center) [out=0, in=180] to (r2.center);
\draw[-] (r2.center) [out=0, in=0] to (f);

\draw[-] (d) [out=180, in=180] to (s1.center);
\draw[-] (s1.center) [out=0, in=180] to (s2.center);
\draw[-] (s2.center) [out=0, in=0] to (c);
\end{tikzpicture}
\caption{Wrapped up diagram}\label{fig:k33b}
\end{subfigure}

\begin{subfigure}{\textwidth}
\captionsetup{width=0.7\textwidth}
\vspace{.25cm}
         \centering
         \includegraphics[scale=.45]{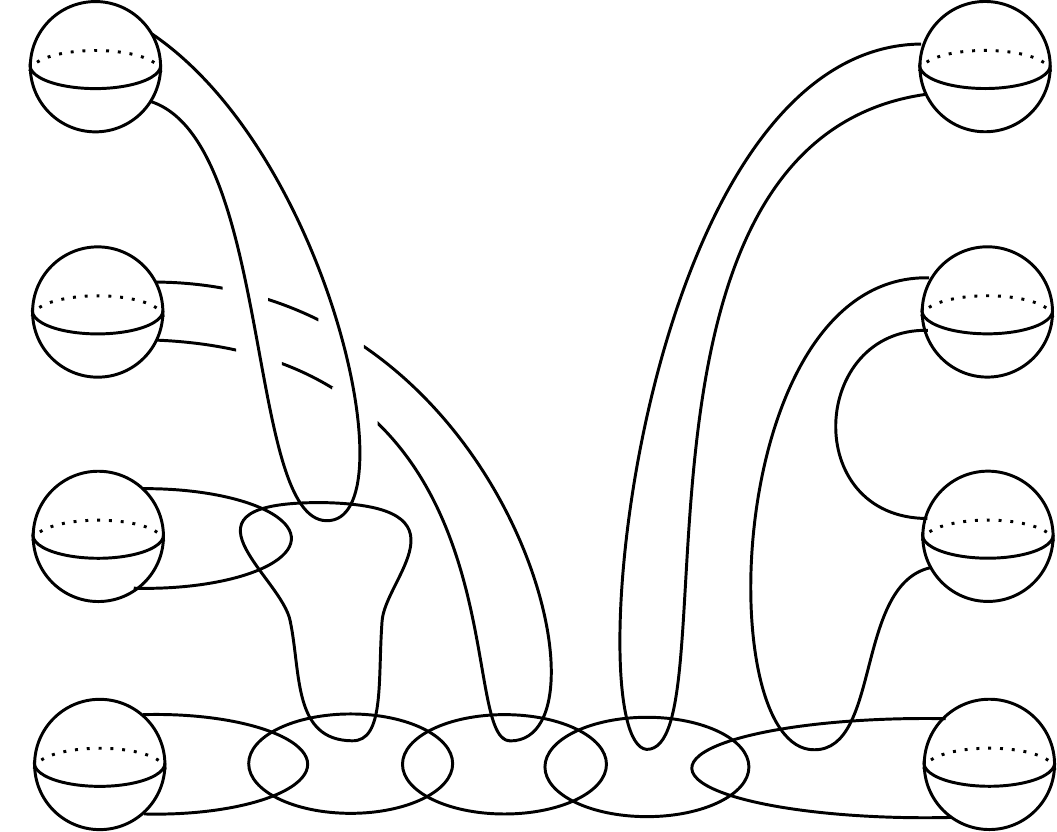}
         \caption{Smooth handlebody diagram of a $\mathcal{K}_{3,3}$ plumbing (without the linking information)}
         \label{fig:k33smooth}
\end{subfigure}
\begin{subfigure}{\textwidth}
\captionsetup{width=0.7\textwidth}
\vspace{.25cm}
         \centering
         \includegraphics[scale=.45]{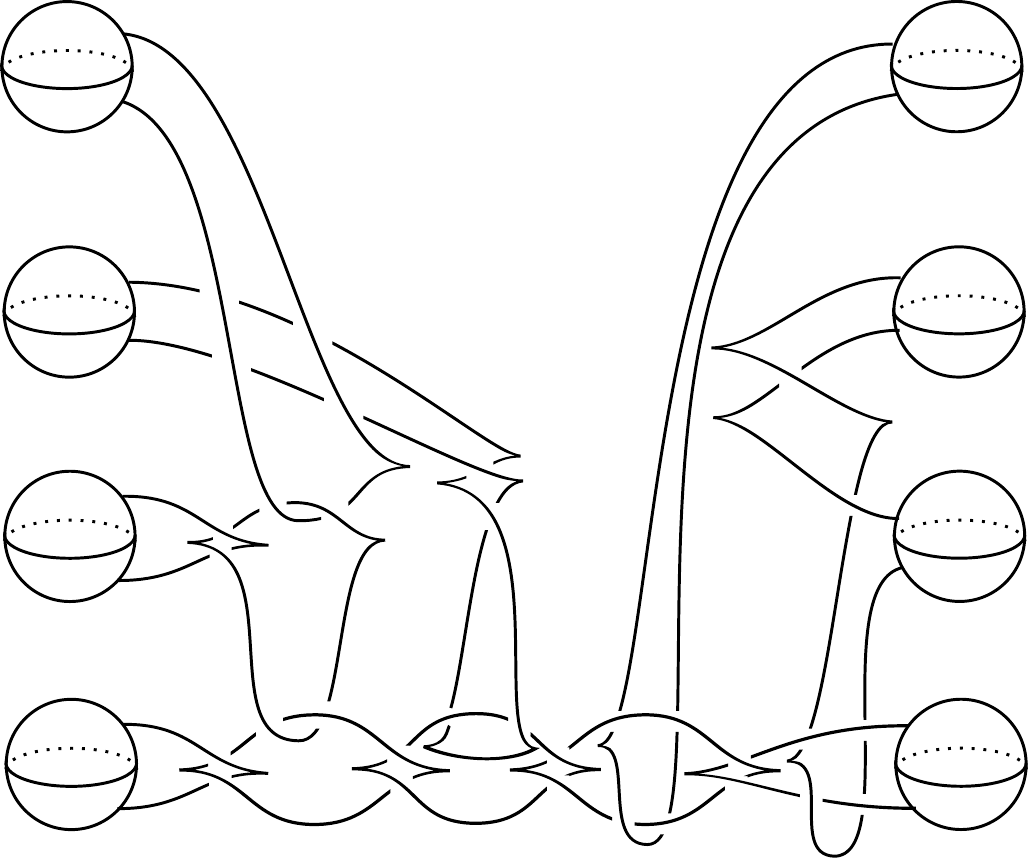}
         \caption{A Legendrian handlebody diagram of a $\mathcal{K}_{3,3}$ plumbing with two negative edges and $tb=-1$ unknots}
         \label{fig:k33stein}
\end{subfigure}

    \caption{The $\mathcal{K}_{3,3}$ plumbing}
    \label{fig:k33}
\end{figure}

\begin{remk}
    The case of $\mathcal{K}_{3,3}$ indicates that the above algorithm for drawing Legendrian diagrams with $tb=-1$ becomes more difficult in the nonplanar case. Hence for more general nonplanar plumbings of higher valence, the algorithm requires some modification.
\end{remk}

\subsection{General Case: Proof of Theorem \ref{thm:mainminimallytwisting}}

Finally, let $\Gamma$ be any plumbing graph with no bad vertices whose vertices are at most trivalent. Let its vertices have weights $-a_1,\ldots,-a_n$. Then it is either $\mathcal{K}_{3,3}$ or it is made of clusters that are connected by tree subgraphs and each cluster might have trees emanating from them. We can thus apply the above algorithm to each cluster, draw each handlebody diagram, stack them vertically, connect diagrams with the appropriate 2-handles, and add any 2-handles associated with any trees emanating from the clusters. We can then draw each unknot as standard Legendrian unknots with $tb=-1$. 
Now for $1\le i\le n$, we stabilize the $a_i-$framed unknot $K_i$ in the diagram $a_i-2$ times so that $tb(K_i)=-(a_i-1)$. Since there are $a_i-1$ ways to stabilize $K_i$, by \cite{LM}, the 4-dimensional plumbing admits $(a_1-1)\ldots(a_n-1)$ distinct Stein structures, which induce distinct Stein fillable contact structures on the boundary plumbed 3-manifold; Theorem 2 follows.

\section{Plumbed 3-manifolds with higher valence vertices}\label{nfibers}
In this section, we consider tight structures on plumbed 3-manifolds with no bad vertices containing vertices with valence $\geq 3.$ What sets these plumbed 3-manifolds apart from plumbed 3-manifolds that have vertices that are at most trivalent is the presence of nontrivially intersecting nonisotopic incompressible tori. To see this, suppose that a plumbed 3-manifold $M$ has a vertex $v$ of valence $n$. Cut $M$ along each of the tori corresponding to the $n$ edges emanating from $v$. This cuts $M$ into two pieces, one of which is a copy of $\Sigma_0^n\times S^1$, where $\Sigma_0^n$ is the $n-$punctured 2-sphere. Notice that any torus in $\Sigma_0^n\times S^1$ of the form $S^1\times S^1\subset \Sigma^n_0\times S^1$, where $S^1\times\{pt\}\subset \Sigma^n_0\times\{pt\}$ is not boundary parallel is an incompressible torus in $M$ (and a torus such that $S^1\times\{pt\}$ is boundary parallel is incompressible if and only if the corresponding boundary component is not the boundary of a solid torus in $M$). Figure \ref{fig:IT} displays four such incompressible tori in $\Sigma_0^5\times S^1$, some of which are disjoint and some of which intersect.
In these plumbed 3-manifolds, Giroux torsion can be added in a neighborhood of any incompressible torus; however, if two such tori intersect nontrivally, then Giroux torsion cannot be added to both simultaneously. On the other hand, if two such tori are disjoint, then Giroux torsion can be added to one or both. The simplest such plumbed 3-manifolds are the so-called called start-shaped plumbings (see Figure \ref{fig:starshaped}); these are Seifert fiber spaces with $n$ singluar fibers.
The first author considered the case when $n=4$ in \cite{Sha}.

The techniques used in \cite{Sha} and in Sections \ref{sec:example} and \ref{sec:stein} above can be used to classify minimally twisting tight structures on plumbed 3-manifolds with no bad vertices.
One can also use the convex surface theory techniques and the rigidity obstruction used in Section \ref{sec:example} to obtain upper bounds on the tight contact structures with Giroux torsion on an ad hoc basis.  

\begin{figure}
\centering
\includegraphics[scale=0.3]{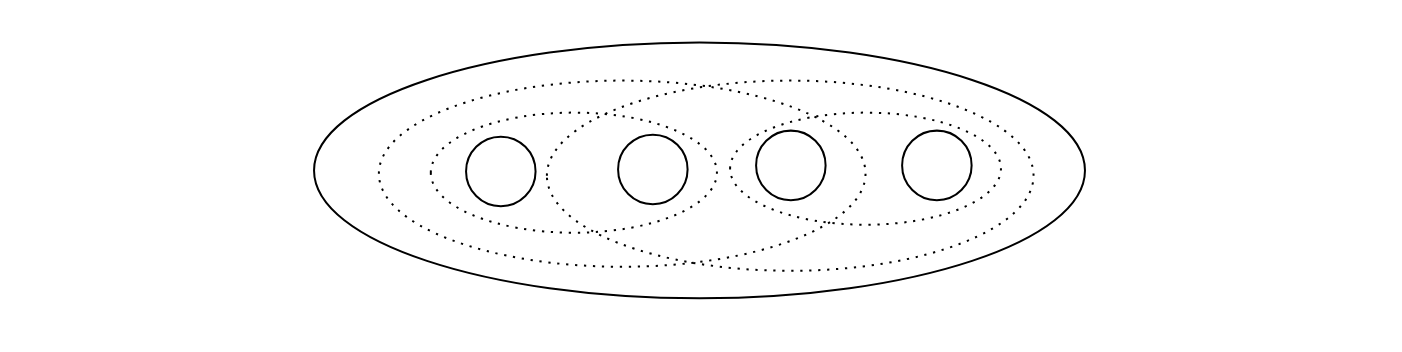}
\caption{Some incompressible tori in a Seifert fibered space with five singular fibers.\label{fig:IT}}
\end{figure}

 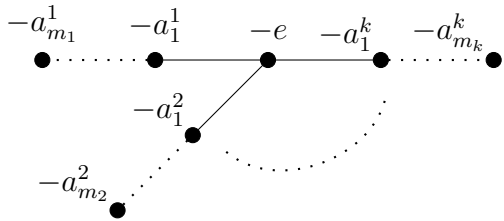
\begin{figure}
    \centering
    \begin{tikzpicture}[dot/.style = {circle, fill, minimum size=1pt, inner sep=0pt, outer sep=0pt}]
\tikzstyle{smallnode}=[circle, inner sep=0mm, outer sep=0mm, minimum size=2mm, draw=black, fill=black];

\node[smallnode, label={90:$-e$}] (0) at (0,0) {};
\node[smallnode, label={90:$-a_1^1$}] (a1) at (-1.5,0) {};
\node[smallnode, label={90:$-a_{m_1}^1$}] (a2) at (-3,0) {};

\node[smallnode, label={[label distance=-.15cm]150:$-a_1^2$}] (b1) at (-1,-1) {};
\node[smallnode, label={[label distance=-.15cm]150:$-a_{m_2}^2$}] (b2) at (-2,-2) {};

\node[smallnode, label={[label distance=-.15cm]150:$-a_1^k$}] (c1) at (1.5,0) {};
\node[smallnode, label={[label distance=-.15cm]150:$-a_{m_k}^k$}] (c2) at (3,0) {};

\node (a) at (-.7,-1.1) {};
\node (b) at (1.6,-.3) {};

\draw[-] (0) -- (a1);
\draw[loosely dotted, thick] (a1) -- (a2);

\draw[-] (0) -- (b1);
\draw[loosely dotted, thick] (b1) -- (b2);

\draw[-] (0) -- (c1);
\draw[loosely dotted, thick] (c1) -- (c2);

\draw[loosely dotted, thick] (a) [out=-40, in=-100] to (b);
\end{tikzpicture}
\caption{A star-shaped plumbing}
    \label{fig:starshaped}
\end{figure}

\end{document}